\colorlet{darkblue}{blue!50!black}
\newtheorem{theorem}{Theorem}[section]
\newtheorem{proposition}[theorem]{Proposition}
\newtheorem{lemma}[theorem]{Lemma}
\newtheorem{corollary}[theorem]{Corollary}
\theoremstyle{definition}
\newtheorem{definition}[theorem]{Definition}
\newtheorem{remark}[theorem]{Remark}
\def\rr{{\mathbb R}}
\def\nn{{\mathbb N}}
\def\cA{{\cal A}}
\def\cC{{\cal C}}
\def\cF{{\cal F}}
\def\cP{{\cal P}}
\newcommand{\CL}{{\cal L}}
\newcommand{\GG}{{\cal G}}
\def\dd{{\textup d}}
\def\d{{\textup d}}
\def\ie{\textit{i.e., }}
\newcommand\abstractbox[2]{{
\begin{center}
\begin{minipage}{0.9\textwidth}\small
\noindent{\bf #1} #2
\end{minipage}\end{center}}}
\numberwithin{equation}{section}
\begin{document}
\def\today{}
\title{Additive, almost additive and asymptotically\\  additive potential sequences are  equivalent}

\author{ Noé~Cuneo\\~\\
{\normalsize Laboratoire de Probabilit\'es, Statistique et Mod\'elisation (LPSM),} \\[-0.5mm]{\normalsize Universit\'e de Paris -- Sorbonne Universit\'e -- CNRS,}\\[-0.5mm]{\normalsize  75205 Paris CEDEX 13 France}}

\maketitle

\abstractbox{Abstract.}{
Motivated by various applications and examples, the standard notion of potential for dynamical systems has been generalized to almost additive and asymptotically additive potential sequences, and the corresponding thermodynamic formalism, dimension theory and large deviations theory have been extensively studied in the recent years. In this paper, we show that every such potential sequence is actually equivalent to a standard (additive) potential in the sense that there exists a continuous potential with the same topological pressure, equilibrium states, variational principle, weak Gibbs measures, level sets (and irregular set) for the Lyapunov exponent and large deviations properties. In this sense, our result shows that almost and asymptotically additive potential sequences do not extend the scope of the theory compared to standard potentials, and that many results in the literature about such sequences can be recovered as immediate consequences of their counterpart in the additive case. A corollary of our main result is that all quasi-Bernoulli measures are weak Gibbs.

\smallskip 
{\bf Keywords: }dynamical systems, non-additive thermodynamic formalism, multifractal analysis

\smallskip 
{\bf MSC 2010: } Primary: 37D35, 
37C45; 
Secondary: 37A60, 
28A78 
}

\medskip 


\section{Introduction}

Let $(X, d)$ be a compact metric space and let $C(X)$ be the space of real-valued continuous functions on $X$ endowed with the sup-norm $\|\cdot\|_\infty$ (the compactness assumption will be lifted in Section~\ref{sec:gennoncompactdisc}). Let $T: X\to X$ be a continuous map. We denote by $\cP(X)$ the space of Borel probability measures on $X$, and by $\cP_T(X)$ the set of $T$-invariant elements of $\cP(X)$.

In the standard (additive) thermodynamic formalism (see for example \cite{walters1982, KH1995, ruelle2004}), the {\em potential} is a function $f \in C(X)$ and the central role is played by the sequence of functions $(S_n f)_{n\geq 1}$, where
\begin{equation}\label{eq:defSnf}
	S_n f = \sum_{k=0}^{n-1} f\circ T^k.
\end{equation}
The sequence $(S_n f)_{n\geq 1}$ is called {\em additive}, which refers to the property that $S_{n+m}f = S_n f + (S_m f)\circ T^n$, $n,m\geq 1$. The  {\em non-additive} thermodynamic formalism, which was first introduced in \cite{falconer_subadditive88}, consists in replacing the sequence $(S_n f)_{n\geq 1}$ with a more general sequence $(f_n)_{n\geq 1}\subset C(X)$ subject to some conditions. Several classes of sequences $(f_n)_{n\geq 1}$ have been studied in the literature, and in particular a lot of attention has been devoted to {\em almost additive} and {\em asymptotically additive} sequences of potentials:
\begin{definition}\label{def:definitionaa}
	A sequence  $(f_n)_{n\geq 1} \subset C(X)$ is said to be
\begin{enumerate}
	\item[(i)] {\em almost additive}  if there exists $C\geq 0$ such that 
\begin{equation}\label{eq:defalmostadd}
	\|f_{n+m}-f_n - f_m\circ T^n\|_\infty\leq C, \quad n,m\geq 1;
\end{equation}
	\item[(ii)] {\em asymptotically additive}  if
\begin{equation}\label{eq:defasymostadd}
	\inf_{f\in C(X)}\limsup_{n\to\infty} \frac 1 n \|f_n - S_n f\|_\infty = 0.
\end{equation}
\end{enumerate}
\end{definition}
Almost additive sequences were first introduced in \cite{barreira-2006,Mummert06} and asymptotically additive sequences in \cite{feng_huang_2010} (see in particular Proposition A.5 (iv) therein for a formulation of the definition as above). It is easy to realize that additive sequences satisfy \eqref{eq:defalmostadd} and \eqref{eq:defasymostadd}. Moreover, it is well known that every almost additive sequence is asymptotically additive, since \eqref{eq:defalmostadd} implies that $\lim_{k\to \infty} \limsup_{n\to\infty}\frac 1 n \|f_n - S_n (\frac {f_k}k)\|_\infty = 0$, see for example \cite[Proposition A.5 (ii)]{feng_huang_2010} or \cite[Proposition 2.1]{zhao_asymptotically_2011}. More references about almost and asymptotically additive sequences will be given below.

The main result of this paper is that the infimum in \eqref{eq:defasymostadd} is actually reached, which answers the open question in \cite[Remark 6.6]{CJPS_topo}, and shows that almost and asymptotically additive sequences are much closer to being additive than previously thought.
\begin{theorem}\label{th:maintheorem}
Let $(f_n)_{n\geq 1}$ be an asymptotically additive (or almost additive) sequence of potentials. Then, there exists $f\in C(X)$ such that
\begin{equation}\label{eq:asympaddnew}
\lim_{n\to\infty}\frac 1n\|f_n - S_n f \|_\infty = 0.
\end{equation}
\end{theorem}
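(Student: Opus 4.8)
The plan is to construct $f$ explicitly as a limit of the natural candidates $f_k/k$, and the key is to show this sequence is Cauchy in a suitable sense. Since $(f_n)$ is asymptotically additive, for every $\varepsilon>0$ there exists $g\in C(X)$ with $\limsup_{n\to\infty}\frac1n\|f_n-S_ng\|_\infty<\varepsilon$. First I would record the elementary consequence of this together with the near-additivity it forces: for asymptotically additive sequences one has $\lim_{k\to\infty}\limsup_{n\to\infty}\frac1n\|f_n-S_n(f_k/k)\|_\infty=0$ (this is in the same spirit as the almost-additive case cited from \cite{feng_huang_2010}, but let me re-derive it from \eqref{eq:defasymostadd}). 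So the candidate potentials $f_k/k$ do approximate $(f_n)$ asymptotically as $k\to\infty$; the issue is that they need not converge in $C(X)$.

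The main step is to upgrade ``$f_k/k$ asymptotically approximates $(f_n)$'' to ``$(f_k/k)$ is Cauchy in $\|\cdot\|_\infty$''. For this I would compare $f_k/k$ and $f_\ell/\ell$ through the sequence itself. The heuristic: $\|S_n(f_k/k)-S_n(f_\ell/\ell)\|_\infty$ is controlled, for large $n$, by $\|f_n-S_n(f_k/k)\|_\infty+\|f_n-S_n(f_\ell/\ell)\|_\infty$, which is $o(n)$ uniformly once $k,\ell$ are large; hence $\frac1n\|S_n h\|_\infty\to 0$ where $h=f_k/k-f_\ell/\ell\in C(X)$. The crucial observation is then a purely additive fact: if $h\in C(X)$ satisfies $\frac1n\|S_nh\|_\infty\to 0$, then in fact $\|h\|_\infty$ is small — more precisely, $\frac1n\|S_nh\|_\infty\to\sup_{\mu\in\cP_T(X)}|\int h\,\dd\mu|$ by the ergodic theorem / a standard Krylov--Bogolyubov argument, so the limit being $0$ forces $\int h\,\dd\mu=0$ for all $T$-invariant $\mu$. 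That alone does not give $\|h\|_\infty$ small, so this naive route is not quite enough: I expect the real argument must track the error $\limsup_n\frac1n\|f_n-S_n(f_k/k)\|_\infty=:\delta_k\to0$ quantitatively and show $\|f_k/k-f_\ell/\ell\|$ is bounded by something like $\delta_k+\delta_\ell$ up to the ``invariant-measure'' correction. The clean way to absorb the correction is to pass to the quotient: work modulo coboundaries and constants, i.e.\ in the space $C(X)/\overline{\{u-u\circ T:u\in C(X)\}+\rr}$, on which $\limsup_n\frac1n\|S_n\cdot\|_\infty$ descends to an honest norm equal to $\mu\mapsto\sup_\mu|\int\cdot\,\dd\mu|$; there $(f_k/k)$ is genuinely Cauchy. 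Then I would lift a limit point back to a concrete $f\in C(X)$: having identified the limit class, choose $f$ in that class, and check that \eqref{eq:asympaddnew} holds by combining $\limsup_n\frac1n\|f_n-S_n(f_k/k)\|_\infty\le\delta_k$ with $\frac1n\|S_n(f_k/k)-S_nf\|_\infty$ being controlled by the quotient-distance from $f_k/k$ to $f$, which we can make arbitrarily small.

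Concretely, here is the order of steps I would carry out. (1) From \eqref{eq:defasymostadd}, derive $\delta_k:=\limsup_{n\to\infty}\frac1n\|f_n-S_n(f_k/k)\|_\infty\to 0$ as $k\to\infty$; this uses that if $g$ is an $\varepsilon$-approximating function then for large $n$, $\|f_n-S_ng\|_\infty\le \varepsilon n$, whence $\|f_k - S_k g\|_\infty \le \varepsilon k$ for large $k$ and one compares $S_n(f_k/k)$ with $S_n g$ via near-telescoping, gaining an $o(n)+O(\varepsilon n)$ bound. (2) Prove the auxiliary lemma: for $h\in C(X)$, $\lim_{n\to\infty}\frac1n\|S_nh\|_\infty=\sup_{\mu\in\cP_T(X)}\big|\int h\,\dd\mu\big|$ (the $\le$ direction is Fekete subadditivity of $n\mapsto\|S_nh\|_\infty$; the $\ge$ direction is Krylov--Bogolyubov applied to points nearly realizing the sup). (3) Apply (2) with $h=f_k/k-f_\ell/\ell$: step (1) shows $\lim_n\frac1n\|S_nh\|_\infty\le\delta_k+\delta_\ell$, so $\sup_\mu|\int(f_k/k-f_\ell/\ell)\,\dd\mu|\le\delta_k+\delta_\ell$, meaning the sequence $(f_k/k)$ is Cauchy in the seminorm $p(h):=\sup_\mu|\int h\,\dd\mu|$. (4) The completion of $C(X)$ in $p$ embeds in $C(\cP_T(X))$ via $h\mapsto(\mu\mapsto\int h\,\dd\mu)$, but I actually need a genuine $f\in C(X)$: here use that $p$-Cauchy plus uniform control lets me extract, by a diagonal/averaging argument (e.g.\ Cesàro-type averaging of the $f_k/k$, or a direct construction), a single $f\in C(X)$ with $p(f_k/k-f)\to 0$ and then feed $f$ back. (5) Finally verify \eqref{eq:asympaddnew}: $\limsup_n\frac1n\|f_n-S_nf\|_\infty\le\limsup_n\frac1n\|f_n-S_n(f_k/k)\|_\infty+\lim_n\frac1n\|S_n(f_k/k)-S_nf\|_\infty\le\delta_k+p(f_k/k-f)\to 0$. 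The main obstacle I anticipate is step (4): the seminorm $p$ only sees invariant measures, so a $p$-limit of continuous functions need not be continuous, and one cannot simply take $f=\lim f_k/k$ pointwise; getting an honest $f\in C(X)$ out of the $p$-Cauchy sequence — perhaps by exploiting that $f_k/k$ is itself built from the given continuous $f_n$, or by a clever choice such as $f=f_{k}/k$ for a single well-chosen large $k$ (which would require a finite-stage version of the argument), or by an averaging/smoothing that stays in $C(X)$ — is where the real work lies.
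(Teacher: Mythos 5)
Your overall plan is the right one, and steps (1)--(3) and (5) are correct, but the obstacle you flag in step (4) is not actually an obstacle, and the fact that you could not resolve it is the gap. You are treating the problem as a \emph{completion} question (``a $p$-limit of continuous functions need not be continuous'') when it is a \emph{quotient} question. Let $\CL = \overline{\{u - u\circ T : u \in C(X)\}}$, the closure taken in $(C(X),\|\cdot\|_\infty)$. By the Israel--Phelps identity that you already invoke, $\CL$ is exactly the kernel of your seminorm $p(h)=\sup_{\mu\in\cP_T(X)}|\int h\,\dd\mu|$, and $p$ descends on $C(X)/\CL$ to the quotient norm $\|\widetilde h\|_* = \inf_{g\sim h}\|g\|_\infty$. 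Since $\CL$ is a \emph{closed} subspace of the Banach space $C(X)$, the quotient $C(X)/\CL$ is automatically a Banach space --- this is the elementary fact that a quotient of a Banach space by a closed subspace is complete. Your sequence $(\widetilde{f_k/k})_{k\geq 1}$ (or, as the paper does, the classes of the approximants $f^{(k)}$ coming directly from the definition of asymptotic additivity, which avoids the extra work of step (1) in the general asymptotically additive case) is Cauchy in this complete space by step (3), hence converges to some class $\widetilde f\in C(X)/\CL$. Now simply pick \emph{any} representative $f\in\widetilde f$: by construction $f\in C(X)$, and your step (5) then goes through verbatim because $\lim_n\frac1n\|S_n(f_k/k)-S_nf\|_\infty = p(f_k/k-f) = \|\widetilde{f_k/k}-\widetilde f\|_*\to 0$. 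No diagonal, Ces\`aro, or averaging argument is needed, and there is no hope (nor need) of choosing $f=f_k/k$ for a single finite $k$. The paper's proof is precisely this, phrased in a slightly more general normed-space setting with $\Lemma~\ref{lem:norme1nsn}$ playing the role of your auxiliary lemma in step (2), and with the completeness of the quotient doing the work you were trying to do by hand.

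Two minor remarks. First, your step (1) does hold for asymptotically (not just almost) additive sequences, but it requires an argument (compare $S_n(f_k/k)$ to $S_n g$ for a good approximant $g$ using the auxiliary lemma applied to $S_k g/k - g$); the paper sidesteps this entirely by Cauchy-ing the $f^{(k)}$'s from the definition rather than $f_k/k$. Second, in your step (2) the $\geq$ direction is most cleanly seen from $\frac1n S_nf\sim f$ (write $f-\frac1n S_nf = h_n - h_n\circ T$ with $h_n = \frac1n\sum_{i=1}^{n-1}S_if$), which directly gives $\frac1n\|S_nf\|_\infty\geq\|\widetilde f\|_*$ for every $n$, without invoking Krylov--Bogolyubov.
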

The proof of Theorem~\ref{th:maintheorem} is elementary and will be provided in Section~\ref{sec:proofmain}. As discussed in Remark~\ref{rem:afterproof} below, the function $f$ in \eqref{eq:asympaddnew} is unique up to some (standard) equivalence relation.
We also note that any sequence $(f_n)_{n\geq 1}\subset C(X)$ satisfying  \eqref{eq:asympaddnew} for some $f\in C(X)$ is obviously  asymptotically additive. Thus, $(f_n)_{n\geq 1}$ is asymptotically additive if and only if it satisfies \eqref{eq:asympaddnew} for some  $f\in C(X)$. 

The relation \eqref{eq:asympaddnew} means that the sequence $(f_n)_{n\geq 1}$ is additive up to sublinear corrections. More precisely, if $ (f_n)_{n\geq 1} \subset C(X)$ is almost or asymptotically additive, then there exists $f\in C(X)$ such that
\begin{equation}\label{eq:fsublinearcorr}
	f_n = S_n f + u_n, \quad n\geq 1,
\end{equation}
where $u_n\in C(X)$ satisfies $\lim_{n\to\infty} n^{-1}\|u_n \|_\infty = 0$.
As discussed in detail in Section~\ref{sec:nonaddpot}, this implies that the sequence $(f_n)_{n\geq 1}$ has the same topological pressure, equilibrium states, variational principle, weak Gibbs measures, level sets (and irregular set) for the Lyapunov exponent and large deviations properties as the potential $f$. This makes much of the theory of almost additive and asymptotically additive sequences redundant in the sense that it becomes an immediate consequence of the standard theory in the additive case and that it does not extend its scope (at least as far as the properties mentioned above are concerned).

However, we stress that the almost additive and asymptotically additive conditions remain very relevant for applications: by Theorem~\ref{th:maintheorem}, they can be seen as sufficient conditions for the additive theory to apply. Moreover, in many cases, the sequence $(f_n)_{n\geq 1}$ is given, but the additive potential $f$ is not known explicitly (Theorem~\ref{th:maintheorem} only proves its existence as a limit in a quotient space). Thus, working with $(f_n)_{n\geq 1}$ instead of $(S_n f)_{n\geq 1}$ may still be required for some explicit computations, viewing $(f_n)_{n\geq 1}$ as an explicitly known approximation of $(S_n f)_{n\geq 1}$.

In 1D statistical mechanics, when $f_n$ is (minus a constant times) the Hamiltonian of a finite-volume system of size $n$ with absolutely summable interactions (see for example \cite[Section 3.2]{ruelle2004}), it is well known that \eqref{eq:asympaddnew} holds when $f$ is the energy per site, and the sublinear corrections $u_n$ in \eqref{eq:fsublinearcorr} are viewed as boundary terms. In a sense, the construction in the present paper yields an $f$ which plays a similar role for general dynamical systems, provided $(f_n)_{n\geq 1}$ is almost or asymptotically additive.

The paper is organized as follows. We prove a slightly more general version of Theorem~\ref{th:maintheorem} in Section~\ref{sec:proofmain}. Section~\ref{sec:nonaddpot} is then mostly intended for the reader less familiar with the non-additive thermodynamic formalism. There, we give the basic definitions of the non-additive thermodynamic formalism and show how they are reduced to the usual additive definitions as a consequence of Theorem~\ref{th:maintheorem}. We also give detailed references about almost and asymptotically additive sequences. In Section~\ref{sec:corollariesetc}, we discuss some less obvious consequences and extensions of Theorem~\ref{th:maintheorem}: In Section~\ref{sec:weakGibbsQB}, we prove that all weakly coupled and quasi-Bernoulli measures are weak Gibbs, which leads to a natural open question about almost additive potential sequences and Gibbs measures (see Section~\ref{sec:openquestionAlmostadd}). In Section~\ref{sec:gennoncompactdisc}, we generalize the main result to non-compact spaces and discontinuous potentials, and we discuss some consequences of the moderate variation condition in Section~\ref{sec:moderatevar}.

\noindent{\bf Acknowledgements}. I would like to thank V.~{Jak\v si\'c} and A.~Shirikyan for providing a clever improvement in the proof of the main result, and C.-E.~Pfister for suggesting a reformulation that led to Theorem~\ref{thm:generalizedversion} below. I am also very thankful to  N.~Dobbs, J.-P.~Eckmann, B.~Fernandez and C.-A.~Pillet for stimulating conversations and useful comments about the manuscript. 

\section{Proof of the main result}\label{sec:proofmain}

We prove here a slight generalization of Theorem~\ref{th:maintheorem}. Throughout this section, we assume that $(V, \|\cdot\|)$ is a normed vector space and that $W\subset V$ is a subspace such that $(W, \|\cdot\|)$ is a Banach space. We let $K: W\to W$ be a contraction (\ie $\|Kf\| \leq \|f\|$ for all $f\in W$), and for $f\in W$, $n\geq 1$, we write $S_nf = \sum_{k=0}^{n-1} K^k f$.
\begin{theorem}\label{thm:generalizedversion}
 Let  $(f_n)_{n\geq 1}\subset V$ be a sequence such that
\begin{equation}\label{eq:defasymostaddgen}
	\inf_{f\in W} \limsup_{n\to\infty}\frac 1n\|f_n - S_n f\| = 0.
\end{equation}
Then, there exists $f\in W$ such that
\begin{equation}\label{eq:asympaddnewgen}
	\lim_{n\to\infty}\frac 1n\|f_n - S_n f\| = 0.
\end{equation}
\end{theorem}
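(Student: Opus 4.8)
The hypothesis \eqref{eq:defasymostaddgen} gives a sequence $g^{(j)}\in W$ with $\limsup_{n\to\infty}\frac1n\|f_n - S_n g^{(j)}\|\to 0$ as $j\to\infty$; concretely, pick $g^{(j)}$ so that $\limsup_n \frac1n\|f_n - S_n g^{(j)}\| \leq 2^{-j}$. The natural guess is that the desired $f$ is a limit of the $g^{(j)}$ in $W$, so the first task is to control $\|g^{(j)} - g^{(j+1)}\|$. Writing $h = g^{(j)} - g^{(j+1)}$, the triangle inequality gives $\limsup_n \frac1n\|S_n h\| \leq 2^{-j} + 2^{-(j+1)} \leq 2^{-j+1}$. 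So the key lemma to prove is: \emph{if $h\in W$, $K$ is a contraction, and $\limsup_{n\to\infty}\frac1n\|S_n h\| \leq \delta$, then $\|h\|$ is controlled by $\delta$} — ideally $\|h\|\leq 2\delta$ or so. Indeed, using $S_{n+1}h - S_n h = K^n h$ together with $\|K^n h\|\leq\|h\|$ does not immediately help, but the identity $\frac1n S_n h = \frac1n\sum_{k=0}^{n-1}K^k h$ combined with a clever averaging should work: the improvement attributed to Jakšić and Shirikyan in the acknowledgements is presumably exactly this observation. A clean route: for any $N$, $\frac{1}{N}\bigl\|\sum_{k=0}^{N-1}(S_{k+1}h - S_k h)\bigr\| = \frac1N\|S_N h\|$, but each summand is $K^k h$; more usefully, averaging the telescoped differences $S_{2n}h - S_n h = \sum_{k=n}^{2n-1}K^k h$ and noting $\|\frac1n(S_{2n}h - S_n h)\|\to$ something bounded by $3\delta$, one relates $h$ to its Cesàro-type averages under $K$. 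The cleanest statement is probably $\|h\| \leq \limsup_n \frac1n\|S_n h\|$ itself fails, so one likely needs $\|h\|\le C\delta$ for a universal constant, which still suffices.

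Granting such a lemma with constant $C$, we get $\|g^{(j)} - g^{(j+1)}\| \leq C\cdot 2^{-j+1}$, so $(g^{(j)})_j$ is Cauchy in the Banach space $W$ and converges to some $f\in W$. It remains to upgrade from $\limsup$ to $\lim$ in \eqref{eq:asympaddnewgen}. For fixed $j$, $\frac1n\|f_n - S_n f\| \leq \frac1n\|f_n - S_n g^{(j)}\| + \frac1n\|S_n(g^{(j)} - f)\| \leq \frac1n\|f_n - S_n g^{(j)}\| + \|g^{(j)} - f\|$, using the contraction property to bound $\frac1n\|S_n w\|\leq\|w\|$. Taking $\limsup_{n}$ gives $\limsup_n \frac1n\|f_n - S_n f\| \leq 2^{-j} + \|g^{(j)} - f\|$, and since the right side $\to 0$ as $j\to\infty$, the $\limsup$ is $0$, i.e.\ the limit exists and equals $0$.

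The main obstacle is the lemma bounding $\|h\|$ by (a constant times) $\limsup_n\frac1n\|S_n h\|$ for a contraction $K$. Naively this looks false — e.g.\ if $K$ is a rotation, $S_n h$ can stay bounded while $h\neq 0$; but then $\limsup_n\frac1n\|S_nh\| = 0$, forcing... nothing, so one must be more careful. Actually this is the crux: the statement as I phrased it is \textbf{false} in general (take $V=W=\cc$, $Kh = e^{i\theta}h$ with $\theta$ an irrational multiple of $2\pi$; then $S_n h$ is bounded, so $\limsup\frac1n\|S_nh\|=0$, yet $h\neq0$). So the correct approach cannot be "$g^{(j)}$ is Cauchy"; rather, one must choose the approximating sequence more cleverly, perhaps extracting $f$ as $\lim f_{n_k}/n_k$ along a subsequence, or averaging the $f_n$ themselves: set $f := \lim_k \frac{1}{n_k}(f_{n_k} - K f_{n_k - 1})$ or some telescoped version, and show this limit exists in $W$ using completeness plus the asymptotic-additivity estimates, then verify \eqref{eq:asympaddnewgen} directly. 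Pinning down the right construction of $f$ — almost surely via a Cauchy argument on cleverly chosen "increments" $\frac1m(f_{n+m}-f_n)$ rather than on the $g^{(j)}$ — is where the real work lies, and I expect the paper's elementary proof to hinge on precisely this choice.
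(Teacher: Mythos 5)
You correctly identified the obstruction, and your rotation counterexample is exactly right: $\limsup_n\frac1n\|S_nh\|$ does \emph{not} control $\|h\|$, so the plan of showing $(g^{(j)})_j$ is Cauchy in $(W,\|\cdot\|)$ cannot work. But you then stop short of the fix, and the alternatives you float (extracting $f$ as $\lim f_{n_k}/n_k$, telescoping increments $\frac1m(f_{n+m}-f_n)$) are not what the paper does and would run into the same difficulty.

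The missing idea is to run your Cauchy argument not in $W$ but in the quotient Banach space $W/\CL$, where $\CL = {\rm cl}\{h - Kh : h\in W\}$ is the closure of the coboundaries. Since $\CL$ is a closed subspace of the Banach space $W$, the quotient is Banach for the norm $\|\widetilde h\|_* = \inf_{g\sim h}\|g\|$. The key lemma---whose proof uses the elementary identity $f - \frac1n S_n f = h_n - K h_n$ with $h_n=\frac1n\sum_{i=1}^{n-1}S_if$ together with the contraction property of $K$---is that
\begin{equation*}
\lim_{n\to\infty}\frac1n\|S_nh\| = \|\widetilde h\|_*\quad\text{for every }h\in W.
\end{equation*}
This is the correct replacement for the false estimate $\|h\|\le C\limsup_n\frac1n\|S_nh\|$: the quantity $\limsup_n\frac1n\|S_nh\|$ controls the \emph{quotient} norm of $h$, not $\|h\|$ itself. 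Your bound $\limsup_n\frac1n\|S_n(g^{(j)}-g^{(j+1)})\|\le 2^{-j+1}$ then reads $\|\widetilde g^{(j)}-\widetilde g^{(j+1)}\|_*\le 2^{-j+1}$, so $(\widetilde g^{(j)})_j$ is Cauchy in $W/\CL$ and converges to some class $\widetilde f$; take any representative $f\in W$. Your closing triangle-inequality step is essentially right, except that the second term should be handled via the lemma: $\limsup_n\frac1n\|S_n(g^{(j)}-f)\|=\|\widetilde g^{(j)}-\widetilde f\|_*\to 0$, rather than bounded by $\|g^{(j)}-f\|$, since $g^{(j)}\to f$ holds only in the quotient. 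Your rotation example is instructive from this vantage point: there $\CL=\cc$, the quotient is trivial, and indeed $\frac1n\|S_nh\|\to0$ for all $h$, with no contradiction.
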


Theorem~\ref{th:maintheorem} corresponds to the special case $V = W = C(X)$, $\|\cdot\| = \|\cdot\|_\infty$ and $Kf = f\circ T$.
The general formulation given in Theorem~\ref{thm:generalizedversion} allows, for example, to consider also functions $f_n$ that are possibly discontinuous, by choosing $V = B(X)$, where $B(X)$ is the space of bounded functions, and $W = C(X)$, with still  $\|\cdot\| = \|\cdot\|_\infty$ and $Kf = f\circ T$. Furthermore, Theorem~\ref{thm:generalizedversion} will turn out to be a convenient formulation when we deal with non-compact spaces $X$ in Section~\ref{sec:gennoncompactdisc}.

The main ingredient of the proof is the following standard construction (see Remark~\ref{rem:setupmainthm} for comments). Let $\CL = {\rm cl}\{h - Kh: h\in W\} \subset W$, where the closure is taken in the Banach space $(W, \|\cdot\|)$. Consider then the equivalence relation $\sim$ on $W$ defined by $f\sim g$ whenever $f -g \in \CL$. Let then  $\widetilde f = \{g\in W: f\sim g\}$ be the equivalence class of $f\in W$. Since $\CL$ is closed in $(W, \|\cdot\|)$, the quotient space $W/{\sim}$ is a Banach space (see for example \cite[Proposition 1.35]{fabian2011banach}) with respect to the norm $\|\cdot\|_*$ defined by
\begin{equation}\label{eq:normstar}
\|\widetilde f\|_{*} = \inf_{g\in \widetilde  f} \|g\|, \quad \widetilde f \in W/{\sim}.
\end{equation}
In order to prove Theorem~\ref{thm:generalizedversion}, we need the following lemma (see for example \cite[Proposition 2.35]{van_enter_regularity_1993}):
\begin{lemma}\label{lem:norme1nsn}Let $f\in W$. Then,
\begin{equation}\label{eq:norm1n}
	 \lim_{n\to\infty} \frac 1 n \|S_n f\| =\|\widetilde f\|_*.
\end{equation}
\end{lemma}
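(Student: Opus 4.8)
The plan is to prove the two inequalities $\limsup_n \frac1n\|S_nf\| \le \|\widetilde f\|_*$ and $\liminf_n \frac1n\|S_nf\| \ge \|\widetilde f\|_*$ separately, using the subadditive structure of $n\mapsto \|S_nf\|$.

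First I would establish the \emph{upper bound}. The key observation is that $n\mapsto \|S_nf\|$ is subadditive: since $S_{n+m}f = S_nf + K^n(S_mf)$ and $K$ is a contraction, $\|S_{n+m}f\|\le \|S_nf\| + \|S_mf\|$. By Fekete's lemma, $\lim_n \frac1n\|S_nf\|$ exists and equals $\inf_n \frac1n\|S_nf\|$; call this limit $L$. Now for any $h\in W$ one has, by telescoping, $S_n(f - (h-Kh)) = S_nf - (h - K^nh)$, hence $\|S_n(f-(h-Kh))\| \ge \|S_nf\| - 2\|h\|$, so $L = \lim_n\frac1n\|S_n(f-(h-Kh))\| \ge$ wait—I need the inequality in the useful direction: $\|S_nf\| \le \|S_n(f-(h-Kh))\| + 2\|h\|$, so dividing by $n$ and letting $n\to\infty$ gives $L \le \lim_n \frac1n\|S_n g\|$ for every $g\sim f$ with $g = f-(h-Kh)$, $h\in W$; and by density (continuity of $g\mapsto \lim_n\frac1n\|S_ng\|$, which follows from $|\frac1n\|S_ng\| - \frac1n\|S_ng'\||\le \|g-g'\|$) this extends to all $g$ in the closure, i.e. all $g\in\widetilde f$. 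Finally $\lim_n\frac1n\|S_ng\| \le \lim_n\frac1n \cdot n\|g\| = \|g\|$ using the contraction property term by term, so $L \le \inf_{g\in\widetilde f}\|g\| = \|\widetilde f\|_*$.

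Next the \emph{lower bound}, which I expect to be the main obstacle. Here I want $\frac1n\|S_nf\| \ge \|\widetilde f\|_* - o(1)$. The natural approach is: for fixed $n$, average $S_nf$ over a long orbit to produce a near-minimizer of the norm in the equivalence class. Concretely, observe that $\frac1n S_nf$ differs from $f$ by an element of $\CL$: indeed $\frac1n S_nf - f = \frac1n\sum_{k=0}^{n-1}(S_nf\circ\text{shift}...)$—more cleanly, a Cesàro/averaging identity shows $\frac1N S_N\!\big(\frac1n S_nf\big) - \frac1{nN}S_{nN}f$ is a telescoping remainder of norm $O(1/N)$, and $\frac1n S_nf - \frac1{n}S_nf$... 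Let me restate the standard trick: for any $g\in W$, $S_m g - \frac{m}{n}S_n g = $ (a bounded error of size $\le 2\|S_n g\|$ when $n\mid m$, as $S_{kn}g = \sum_{j=0}^{k-1}K^{jn}S_ng$ so $S_{kn}g$ is itself an $S_k$-type sum of $K^n$ applied to $S_ng$; iterating the subadditivity argument). The point is that $\frac1n S_nf$ lies in a \emph{fixed} equivalence class, and in fact one shows $\frac1n S_nf \sim f$ is \emph{false} in general—rather, what is true and needed is that $\|\widetilde f\|_* \le \liminf_n \frac1n\|S_nf\|$ because $\|\widetilde{\frac1n S_nf}\|_* $ relates back to $\|\widetilde f\|_*$. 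I would invoke the cited reference \cite[Proposition 2.35]{van_enter_regularity_1993} for the precise averaging argument, but the mechanism is: $\widetilde{S_nf} = n\widetilde f$ in $W/{\sim}$ because $S_nf - nf = \sum_{k=0}^{n-1}(K^kf - f) = -\sum_{k=0}^{n-1}\sum_{j=0}^{k-1}(K^j f - K^{j+1}f)\in\CL$; hence $\|\widetilde f\|_* = \frac1n\|\widetilde{S_nf}\|_* = \frac1n\inf_{g\sim S_nf}\|g\| \le \frac1n\|S_nf\|$. Taking $\liminf$ (indeed, this holds for every $n$) gives $\|\widetilde f\|_* \le \inf_n\frac1n\|S_nf\| = L$.

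Combining the two bounds yields $L = \|\widetilde f\|_*$, which is \eqref{eq:norm1n}. The only genuinely delicate point is verifying that $S_nf - nf \in\CL$ (equivalently $\widetilde{S_nf} = n\widetilde f$), which reduces to the telescoping identity $K^kf - f = -\sum_{j=0}^{k-1}(K^j f - K^{j+1}f) = -\sum_{j=0}^{k-1}(I-K)(K^jf)$, each summand of the form $h - Kh$ with $h = K^j f\in W$; summing over $k=0,\dots,n-1$ stays in the linear span of $\{h-Kh\}$, hence in $\CL$. Everything else is the Fekete lemma plus the Lipschitz continuity of $g\mapsto\frac1n\|S_ng\|$ with constant $1$, which handles passing from the dense set $\{f-(h-Kh):h\in W\}$ to all of $\widetilde f$.
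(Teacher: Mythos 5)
Your proof is correct and rests on the same central algebraic fact as the paper's: that $\tfrac1n S_nf$ is a representative of $\widetilde f$, equivalently $S_nf - nf \in \{h - Kh : h\in W\}\subset\CL$. You derive this by expanding $S_nf - nf = \sum_{k=0}^{n-1}(K^kf-f)$ and telescoping each term into the linear span of $\{h-Kh\}$; the paper instead writes down the explicit primitive $h_n = \tfrac1n\sum_{i=1}^{n-1}S_if$ with $f - \tfrac1n S_nf = h_n - Kh_n$. These are the same computation packaged differently. Your upper bound (picking $g = f - (h-Kh)$ near the infimum, telescoping $S_n(h-Kh) = h - K^nh$, and bounding by $\|g\|$) is also the paper's argument, dressed up slightly: the paper simply takes an $\varepsilon$-near-minimizer $h_\varepsilon$ and divides by $n$, without needing Fekete's lemma or the $1$-Lipschitz continuity of $g\mapsto L_g$ that you invoke to pass to the closure. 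Those additions are harmless but superfluous, since the paper's direct $\liminf$/$\limsup$ bounds avoid having to establish existence of the limit first.

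One point you should fix: in the middle of the lower-bound discussion you assert that ``$\tfrac1n S_nf \sim f$ is \emph{false} in general,'' and then immediately prove $\widetilde{S_nf} = n\widetilde f$, i.e.\ $S_nf - nf\in\CL$, which (since $\CL$ is a subspace) is exactly the statement $\tfrac1n S_nf\sim f$. The relation is in fact \emph{true} --- it is precisely what your telescoping shows and what the paper records as its first observation --- so that sentence contradicts your own conclusion and should be deleted. The rambling Ces\`aro/averaging digression just before it can also be cut, as it plays no role in the argument you ultimately give.
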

\begin{proof}{}
We start by observing that for each $n\geq 1$,
\begin{equation}\label{eq:1nSnequivf}
\frac 1 n S_n f \sim f.	
\end{equation}
Indeed, one can check that $f - \frac 1 n S_n f = h_n - K h_n$ with $h_n =\frac 1n\sum_{i=1}^{n-1}S_i f$, so that \eqref{eq:1nSnequivf} holds. 

By \eqref{eq:1nSnequivf} and the definition of $\|\cdot\|_{*}$, we have  $ \frac 1n\|S_n f \| \geq \|\widetilde f\|_*$ and thus   $\liminf_{n\to\infty} \frac 1 n \|S_n f\| \geq \|\widetilde f\|_{*}$.
To prove a bound in the opposite direction, we observe that for every $\varepsilon > 0$, there exists $h_\varepsilon \in W$ such that $\|f+h_\varepsilon-K h_\varepsilon\| \leq \|\widetilde f\|_{*}+\varepsilon$. It follows that
\begin{align*}
	\|S_n f \| &\leq 	\|S_n (f+h_\varepsilon-Kh_\varepsilon ) \| + 	\|S_n (h_\varepsilon-Kh_\varepsilon )\| \\
&\leq n\|f+h_\varepsilon-Kh_\varepsilon  \| + \|h_\varepsilon- K^nh_\varepsilon\| \leq n(\|\widetilde f\|_{*}+\varepsilon) + 2\|h_\varepsilon\| .
\end{align*}
Dividing by $n$, taking the limit as $n\to \infty$ and finally using that $\varepsilon$ is arbitrary yields the inequality $	 \limsup_{n\to\infty} \frac 1 n \|S_n f\| \leq \|\widetilde f\|_{*}$, which completes the proof.
\end{proof}

\begin{proof}[Proof of Theorem~\ref{thm:generalizedversion}]{}  Let $(f_n)_{n\geq 1} \subset V$ satisfy \eqref{eq:defasymostaddgen}. Then, for each $k\geq 1$, there exists $f^{(k)} \in W$ such that
\begin{equation}\label{eq:redefaa}
\limsup_{n\to\infty} \frac 1 n\|f_n-S_n f^{(k)}\| \leq \frac 1 k.	
\end{equation}
By \eqref{eq:norm1n} and \eqref{eq:redefaa},  we obtain that for all $k,\ell \geq 1$,
\begin{equation}\label{eq:cauchyseq}
\|\widetilde f^{(k)} -\widetilde  f^{(\ell)}\|_* = \lim_{n\to\infty} \frac 1 n\|S_n f^{(k)}-S_n f^{(\ell)}\| \leq \frac 1k + \frac 1\ell.	
\end{equation}
It follows that the sequence $(\widetilde f^{(k)})_{k\geq 1}$ is Cauchy in $(W/{\sim}, \|\cdot\|_*)$, and hence there exists $f\in W$ such that $\lim_{k\to\infty}\|\widetilde f^{(k)}-\widetilde f\|_* = 0$. We find, using again \eqref{eq:norm1n} and \eqref{eq:redefaa}, that for any $k\geq 1$,
\begin{align*}
\limsup_{n\to\infty} \frac 1 n\|f_n-S_n f\| &\leq \limsup_{n\to\infty} \frac 1 n\big(\|f_n-S_n f^{(k)}\|+\|S_n f^{(k)}-S_n f\|\big)	\leq \frac 1k + \|\widetilde f^{(k)}-\widetilde f\|_*.
\end{align*}
Taking the limit as $k\to\infty$ establishes \eqref{eq:asympaddnewgen}.  	\end{proof}

\begin{remark}\label{rem:afterproof} The vector $f\in W$ in \eqref{eq:asympaddnewgen} is not unique. The Cauchy sequence argument above yields an equivalence class $\widetilde f$, and indeed the set of all vectors $f\in W$ satisfying \eqref{eq:asympaddnewgen} forms an equivalence class under $\sim$, since by Lemma~\ref{lem:norme1nsn},
\begin{equation}\label{eq:equivrelSn}
	f\sim g \quad \text{ iff } \quad \lim_{n\to\infty} \frac 1n\|S_n f - S_n g\| = 0.
\end{equation}
Moreover, it is obvious from the proof that given any sequence  $( f^{(k)})_{k\geq 1} \subset W$ such that
$$\lim_{k\to\infty}\limsup_{n\to\infty} \frac 1 n\|f_n-S_n f^{(k)}\| =  0,$$
we have   $\lim_{k\to\infty}\widetilde f^{(k)} = \widetilde f$ in $(W/{\sim}, \|\cdot\|_*)$,
and by the discussion above, the limit $\widetilde f$ does not depend on the choice of $( f^{(k)})_{k\geq 1}$.
\end{remark}

\begin{remark}\label{rem:setupmainthm} In the setup of Theorem~\ref{th:maintheorem}, \ie when $V = W = C(X)$, $\|\cdot\| = \|\cdot\|_\infty$ and $Kf = f\circ T$, the following comments can be made:
\begin{enumerate}[leftmargin=2em]
\item The sequence $(n^{-1}\widetilde f_n)_{n\geq 1}$ converges to $\widetilde f$ in $C(X)/{\sim}$. Indeed,
 by \eqref{eq:1nSnequivf}, we have
\begin{equation*}
	\left\|\frac 1 n\widetilde f_n - \widetilde f\,\right\|_* \leq  \frac 1 n \|f_n - S_n f\|_\infty 
\end{equation*}
and the right-hand side converges to zero as $n\to\infty$ by \eqref{eq:asympaddnew}.
\item For all $f\in C(X)$, we have also $\|\widetilde f\|_{*} = \sup_{\mu\in \cP_T(X)} |\int f \d \mu|$, see for example \cite[Proposition 2.1]{phelps_israel_84}.
\item We have $\CL = {\rm cl}\{h - h\circ T: h\in C(X)\}$, where the closure is taken in $(C(X), \|\cdot\|_\infty)$. The elements of $\CL$ are sometimes called {\em weak coboundaries} \cite{BouschJenkinson2002}, and there are other equivalent definitions of $\CL$, see for example \cite[Proposition 2.13]{KatokRobinson2001}.  If $f\sim g$, then $f$ and $g$ are sometimes called {\em physically equivalent} in statistical mechanics  \cite[Section 4.6]{ruelle2004} or {\em weakly cohomologous}  \cite{BouschJenkinson2002}.
\item With \eqref{eq:equivrelSn} in mind, it is natural to generalize physical equivalence to non-additive sequences. We view two sequences $(f_n)_{n\geq 1}, (g_n)_{n\geq 1} \subset C(X)$ as physically equivalent if $\lim_{n\to\infty} \frac 1n\|f_n -  g_n\|_\infty = 0$ (see for example \cite[Remark A.6 (ii)]{feng_huang_2010} and \cite[Section 3.2]{bomfim_multifractal_2015} for similar considerations). This is indeed a generalization, since by \eqref{eq:equivrelSn}, for every $f,g\in C(X)$, the two additive sequences $(S_n f)_{n\geq 1}$ and $(S_n g)_{n\geq 1}$ are equivalent iff  $f\sim g$. In this language, \eqref{eq:asympaddnew} says that the sequence $(f_n)_{n\geq 1}$ is physically equivalent to the additive sequence $(S_n f)_{n\geq 1}$, and the precise implications of this are discussed in Section~\ref{sec:nonaddpot} below.
\end{enumerate}
\end{remark}

\section{Non-additive potential sequences}\label{sec:nonaddpot}

This section is intended for the reader less familiar with the non-additive thermodynamic formalism. We briefly review some basic concepts and results and explain the immediate consequences of Theorem~\ref{th:maintheorem} on the theory of almost additive and asymptotically additive sequences.

The first generalization of the thermodynamic formalism to non-additive sequences of functions was provided by Falconer in \cite{falconer_subadditive88}, where the main motivation was the dimensional analysis of non-conformal repellers.
Non-additive sequences of potentials typically arise from products of matrices of the kind
\begin{equation}\label{eq:fnprodmatrix}
	f_n(x) = h\big(M(x)\cdot M(Tx)\cdot M(T^2x) \cdots M(T^{n-1}(x))\big),
\end{equation}
where $M$ is a continuous matrix-valued function on $X$ (in many cases, $M$ is the Jacobian matrix of $T$ or its inverse) and $h$ is a real-valued function, typically $h(A) = \log \|A\|$ for some matrix norm $\|\cdot\|$. Except in very specific cases (for example if $M$ is the Jacobian of a conformal map $T$), the sequence $(f_n)_{n\geq 1}$ is not additive. However, depending on the properties of the matrices $M(x)$, the sequence can be almost additive or asymptotically additive.
We refer the reader to the numerous works cited below for many applications of non-additive sequences, and as of now we focus only on the ``abstract'' formalism of such sequences.

\subsection{Non-additive thermodynamic formalism}\label{sec:nonaddthermo}

We introduce the distance $d_n(x,y) = \max_{k\in \{0, 1, \dots, n-1\}}d(T^kx,T^ky)$, $n\geq 1$.
In this section, $\cF = (f_n)_{n\geq 1} \subset C(X)$ is any sequence of real-valued, continuous functions.
We now define the basic notions of the non-additive thermodynamic formalism, which are standard and direct generalizations of the corresponding concepts in the additive case. 

\begin{definition}The {\em topological pressure} of the sequence $\cF$ is defined as
	\begin{equation}\label{eq:defpressure}
	P_T(\cF) = \lim_{\varepsilon \downarrow 0}\limsup_{n\to\infty}\frac 1 n \log\Big( \sup_{E} \sum_{x\in E} e^{f_n(x)}\Big),
\end{equation}
where the supremum is taken over all $(n,\varepsilon)$-separated sets, \ie all sets $E\subset X$ such that for any $x,y\in E$, $x\neq y$, we have $d_n(x,y)\geq \varepsilon$ (see for example \cite{cao_thermodynamic_2008}).
\end{definition}
As in the additive case (see \cite[Chapter 9]{walters1982}, \cite[Section 20.2]{KH1995}), there are other possible definitions which, for the sequences that will be considered in the present paper, are equivalent to \eqref{eq:defpressure}. 
In the special case of shift spaces, one can, again as in the additive case, omit the parameter $\varepsilon$ and define $P_T(\cF)$ as  $\limsup_{n\to\infty}\frac 1 n \log\sum_{C} \sup_{x\in C} e^{f_n(x)}$, where the sum is over all cylinder sets $C$ of rank $n$.

\begin{definition}
Given $\mu \in \cP_T(X)$, we define the {\em average Lyapunov exponent} of $\cF$ by
\begin{equation}\label{eq:avgLyapexp}
	\cF_*(\mu) = \lim_{n\to\infty}\frac 1 n\int f_n \dd \mu.
\end{equation}
\end{definition}
The limit in \eqref{eq:avgLyapexp} exists and is finite for the sequences $\cF$ of interest in the present paper. We note that in the additive case, \ie if $f_n = S_n f$ for some $f\in C(X)$, we simply obtain $\cF_*(\mu) = \int f \d \mu$.

\begin{definition}
	The sequence $\cF$ satisfies the {\em variational principle} if\,\footnote{If $\cF_*(\mu)$ can take the value $-\infty$, which is not the case with almost and asymptotically additive sequences, a more detailed statement is required in order to remove some ambiguity, see for example \cite[Theorem 1.1]{cao_thermodynamic_2008}.}
\begin{equation}\label{eq:varprinciple}
	P_T(\cF) = \sup_{\mu\in  \cP_T(X)}\left(	\cF_*(\mu) + h_T(\mu)\right),
\end{equation}
where $h_T(\mu)$ is the Kolmogorov--Sinai entropy of $\mu$. 
Any measure $\mu\in  \cP_T(X)$ at which the supremum in \eqref{eq:varprinciple} is reached is called an {\em equilibrium state} (or {\em equilibrium measure}) for $\cF$. 
\end{definition}

The first variational principle for non-additive sequences was proved by Falconer in \cite{falconer_subadditive88}. There, the sequence $\cF$ is sub-additive, \ie $f_{n+m} \leq f_n + f_m\circ T^n$ for all $n, m\geq 1$. Results about sub-additive sequences are easily adapted to almost additive sequences, since for any almost additive sequence $\cF$, the sequence $(f_n + C)_{n\geq 1}$ is sub-additive (with $C$ as in \eqref{eq:defalmostadd}).\footnote{Our main result does not apply to general sub-additive sequences and we refer the reader to \cite{cao_thermodynamic_2008} and \cite[Chapters II.4 and III.7]{Bar2011} for a detailed exposition of the sub-additive thermodynamic formalism and its applications. The same comment applies to {\em asymptotically sub-additive sequences}, see for example \cite{feng_huang_2010,Yan_2013}.}

Another version of the variational principle was proved by Barreira \cite[Theorem 1.7]{barreira_1996} for sequences $\cF$ satisfying the following condition: there exists $f\in C(X)$ such that $\lim_{n\to\infty}\|f_{n+1} - f_{n} \circ T -  f\|_\infty = 0$.  Such sequences are asymptotically additive (see for example \cite[Proposition A.5 (iii)]{feng_huang_2010}), and, under the stronger condition $\sum_{n\geq 1}\|f_{n+1} - f_{n} \circ T -f\|_\infty <\infty$, they are almost additive.

The almost additive thermodynamic formalism was introduced by Barreira \cite{barreira-2006} and Mummert \cite{Mummert06, MummertThesis}. These works define almost additive sequences and prove, among other results, the variational principle under some additional assumptions that were later removed in \cite{cao_thermodynamic_2008}. See also \cite{FengLau2002,Feng2004} for earlier special cases of almost additive sequences of the form \eqref{eq:fnprodmatrix}.

The first discussion of the asymptotically additive thermodynamic formalism appears in the work \cite{feng_huang_2010} by Feng and Huang, which includes a proof of the variational principle. The definition of asymptotically additive sequences therein is slightly different, but equivalent to \eqref{eq:defasymostadd}. See  \cite[Proposition A.5]{feng_huang_2010} for some relations between the different definitions, and for a proof that almost additive sequences are asymptotically additive. 

Assume now that the sequence $\cF$ is almost or asymptotically additive, and write $f_n$ as in \eqref{eq:fsublinearcorr}.
The contribution of $u_n$ vanishes from \eqref{eq:defpressure} and \eqref{eq:avgLyapexp} in the limit $n\to\infty$, so that actually
\begin{equation*}
	P_T(\cF) = P_T(f), \qquad \cF_*(\mu)= \int f \d \mu,
\end{equation*}
where $P_T(f) = P_T((S_n f)_{n\geq 1 })$ is the standard topological pressure of $f$.
As a consequence, the variational principle \eqref{eq:varprinciple} boils down to the standard, well-known additive variational principle \cite{walters1982, KH1995, ruelle2004}:
\begin{equation*}
	P_T(f) = \sup_{\mu\in  \cP_T(X)}\left( \int f \d \mu + h_T(\mu)\right),
\end{equation*}
and the equilibrium states of $\cF$ are the same as the equilibrium states of $f$ in the usual sense.

\subsection{Gibbs and weak Gibbs measures}\label{sec:Gibbswg}

Many results about almost additive and asymptotically additive sequences involve Gibbs and weak Gibbs measures, which we now define.

\begin{definition}
	We say that a (not necessarily invariant) probability measure $\mu\in \cP(X)$ is a {\em Gibbs measure} for $\cF$ if for all $\varepsilon >0$ small enough, there exists $K_\varepsilon\geq 1$ such that 
\begin{equation*}
	K_\varepsilon^{-1} \leq \frac{\mu(\{y\in X: d_n(x,y) < \varepsilon\})}{e^{f_n(x)-nP_T(\cF)}}\leq K_\varepsilon, \qquad x\in X, n\geq 1.
\end{equation*}
We say that $\mu\in \cP(X)$ is a {\em weak Gibbs measure} for $\cF$ if there exists a family $(K_{n,\varepsilon})_{n\geq 1, \varepsilon > 0}\subset [1, \infty)$ such that
$\lim_{\varepsilon \downarrow 0} \limsup_{n\to\infty} n^{-1} \log K_{n,\varepsilon} = 0$
and
\begin{equation}\label{eq:defweakGibbs}
	K_{n,\varepsilon}^{-1} \leq \frac{\mu(\{y\in X: d_n(x,y) < \varepsilon\})}{e^{f_n(x)-nP_T(\cF)}}\leq K_{n,\varepsilon}, \qquad x\in X, n\geq 1, \varepsilon > 0.
\end{equation}
\end{definition}
In the additive case, \ie when $f_n = S_n f$ for some $f\in C(X)$, we simply say that $\mu$ is (weak) Gibbs with respect to $f$ (the notion of weak Gibbs measure with respect to a function $f$ was first introduced in \cite{Yuri_2003}). 
There are small variations in the definitions of (weak) Gibbs measures in the literature (see for example \cite[Definition 2.3]{varandas_weak_2015}). The discussion in the present paper remains unchanged for other variants of the above definition. In the special case of shift spaces (and, by extension, for systems which have a Markov partition), the common definition is that $\mu$ is a Gibbs measure for $\cF$ if there exists $K\geq 1$ such that
\begin{equation}\label{eq:Gibbsshift}
	K^{-1} \leq \frac{\mu(\cC_n(x))}{e^{f_n(x)-nP_T(\cF)}}\leq K, \qquad x\in X, n\geq 1,
\end{equation}
where $\cC_n(x)$ is the cylinder set of rank $n$ containing $x$, and we say that $\mu$ is a weak Gibbs measure if \eqref{eq:Gibbsshift} holds with $K$ replaced by $K_n$, where $\lim_{n\to\infty} n^{-1} \log K_n  =0$. See for example \cite{iommi_weak_2016}.

When $\cF$ is almost additive and satisfies the bounded variation condition (see Definition~\ref{def:boundedvar} below), the existence of a unique invariant Gibbs measure, which is also the unique equilibrium measure for $\cF$, was established in \cite{barreira-2006,Mummert06, MummertThesis}, see also \cite[Chapter IV.10]{Bar2011}. 

It is well known (see for example \cite[Lemma 4.4]{CJPS_topo}) that any invariant weak Gibbs measure for an asymptotically additive sequence $\cF$ is also an equilibrium measure. The converse is also true in statistical mechanics under some quite general conditions \cite{PS-2019}.

Now, if $\cF$ is asymptotically additive, we have again by Theorem~\ref{th:maintheorem} the decomposition \eqref{eq:fsublinearcorr}. The sublinear corrections $u_n$ can be absorbed into the constants $K_{n,\varepsilon}$ in \eqref{eq:defweakGibbs}, so that $\cF$ and $(S_n f)_{n\geq 1}$ share the same weak Gibbs measures (this is true with all variants of the definition). It follows that the set of all measures $\mu \in \cP(X)$ which are weak Gibbs with respect to asymptotically additive sequences of potentials is no larger than the set of weak Gibbs measures in the usual (additive) sense.

The same is, however, not true of Gibbs measures. In fact, all that can be said a priori is that any measure that is Gibbs with respect to $\cF$ is {\em weak} Gibbs with respect to  $f$. One can argue that this is enough for many applications, and that when working with asymptotically additive sequences of potentials, the notions of weak Gibbs and Gibbs measures are no different; in fact, on Markov shifts, any measure that is weak Gibbs with respect to an asymptotically additive sequence is also Gibbs for another, well-chosen, asymptotically additive sequence, see \cite{iommi_weak_2016}.
However, in the special case where $\cF$ is almost additive, the Gibbs property is genuinely stronger than the weak Gibbs property. This observation leads to an important open question which will be discussed in Section~\ref{sec:openquestionAlmostadd}.

\subsection{Dimension theory and multifractal analysis}

The dimensional analysis of the level sets of non-additive sequences is one of the main motivations for the development of the non-additive thermodynamic formalism. We limit ourselves to a very brief discussion here, and we refer the reader to the books \cite{PesinBook1997,Bar2011} for a detailed exposition of the dimension theory of dynamical systems and its many applications.

\begin{definition}
We define the {\em level sets} of $\cF$ (sometimes called the level sets of the Lyapunov exponent) by
\begin{equation}\label{eq:levelsets}
	E(\alpha) = \left\{x\in X: \lim_{n\to\infty} \frac 1n f_n(x) = \alpha\right\},\quad \alpha \in \rr,
\end{equation}
and we define the {\em irregular set} of $\cF$ as the set where the above limit does not exist.
\end{definition}
The topological entropy as well as various notions of dimension of $E(\alpha)$ and the irregular set have been abundantly studied. For example, let ${\cal E}(\alpha)$ be the topological entropy of the restriction of $T$ to $E(\alpha)$. Then, if $\cF$ is almost additive, Theorem 1 in \cite{Bar_Dout_2009} (see also \cite[Theorem 12.1.1]{Bar2011}) asserts that, under some conditions on the dynamical system, on $\cF$ and on $\alpha$, we have
\begin{equation}\label{eq:condvarprinc}
	{\cal E}(\alpha) = \sup\{h_T(\mu) : \mu \in \cP_T(X), \cF_*(\mu) = \alpha\} = \min\{P_T(q\cF) - q\alpha: q\in \rr\},
\end{equation}
where $q\cF = (q f_n)_{n\geq 1}$.

In fact, most of the attention has been devoted to the generalization
\begin{equation}\label{eq:levelsets2}
	E(\alpha) = \left\{x\in X: \lim_{n\to\infty} {\left(\frac{f_{1,n}(x)}{g_{1,n}(x)},\frac{f_{2,n}(x)}{g_{2,n}(x)}, \dots, \frac{f_{r,n}(x)}{g_{r,n}(x)}\right)}  = \alpha\right\},\quad \alpha \in \rr^r,
\end{equation}
where $r\geq 1$, and, for $1\leq i \leq r$, $\cF_i = (f_{i,n})_{n\geq 1}$ and $\GG_i = (g_{i,n})_{n\geq 1}$ are sequences of continuous functions.

For almost additive sequences, in the case $r = 2$ and $g_{i,n} = n$,  $i=1,2$, $n\geq 1$, a variational principle for the topological entropy of the level sets \eqref{eq:levelsets2} was established in \cite{barreira_gelfert_2006} for repellers under specific assumptions. In a more general case, the {\em $u$-dimension} (which generalizes the topological entropy) of the level sets \eqref{eq:levelsets2} and of the irregular set was studied in \cite{Bar_Dout_2009}. Further results about the level sets \eqref{eq:levelsets} and \eqref{eq:levelsets2} were obtained in \cite{BarralQu2012b} when the constant $\alpha$ is replaced by a continuous function $\alpha: X\to \rr$, see also \cite{BarralQu2012a}. A review was provided in \cite{barreira_rev_2010}, see also \cite{Bar2011}.

For related results and generalizations in the case of asymptotically additive sequences, we refer the reader to \cite{zhao_asymptotically_2011, BarreiraCaoWang2013,Cao_2013,Bar2011}. 

Now, Theorem~\ref{th:maintheorem} again reduces the analysis to the additive case. Writing $f_n$ as \eqref{eq:fsublinearcorr}, we indeed see that the sublinear corrections $u_n$ do not play any role in the definition \eqref{eq:levelsets} of $E(\alpha)$, so that $E(\alpha)$ is in fact the level set of the Birkhoff average $\lim_{n\to\infty} \frac 1n S_n f$. In the same way, $u_n$ vanishes from all the quantities in \eqref{eq:condvarprinc}, so that the relations \eqref{eq:condvarprinc} boil down to the corresponding ones in the additive case. This also applies to the irregular set, and the same considerations extend to the generalization \eqref{eq:levelsets2}, by applying Theorem~\ref{th:maintheorem} to each of the sequences $\cF_i$ and $\GG_i$. 

\begin{remark}Another comment about the limit $\lim_{n\to\infty} \frac 1n f_n$ for asymptotically additive sequences can be made: since $\lim_{n\to\infty} \frac 1n f_n = \lim_{n\to\infty} \frac 1n S_nf$ (when the limit exists), the conclusions of Birkhoff's ergodic theorem applied to $f$ immediately extend to the sequence $\cF$. This simplifies the proof of the version of Birkhoff's ergodic theorem for almost additive and asymptotically additive sequences given for example in \cite{barreira-2006,Mummert06,feng_huang_2010}.
\end{remark}

\subsection{Large deviations}

The asymptotic behavior of the sequence $(n^{-1} f_n)_{n\geq 1}$ can also be studied using the theory of large deviations. Here, we assume that  a sequence of probability measures $(\mu_n)_{n\geq 1} \subset \cP(X)$ is given, and we view $f_n$ as a random variable on $X$ with respect to $\mu_n$, $n\geq 1$.

\begin{definition}
The {\em Large Deviation Principle (LDP)} holds if there exists a lower semicontinuous function $I: \rr \mapsto [0,+\infty]$, called rate function, such that for any open set $O\subset \rr$ and any closed set $F\subset \rr$, we have
\begin{align*}
	\liminf_{n\to\infty} \frac 1 n \log \mu_n \left\{x \in X: \frac 1 n f_n(x) \in O\right\} &\geq -\inf_{x\in O} I(x),\\
	\limsup_{n\to\infty} \frac 1 n \log \mu_n\left\{x \in X: \frac 1 n f_n(x) \in F\right\} &\leq -\inf_{x\in F} I(x).
\end{align*}
\end{definition}
One may also consider the large deviations of the vector in \eqref{eq:levelsets2}, viewed as an $\rr^r$-valued random variable.

The LDP for almost additive and asymptotically additive sequences under some conditions was proved in \cite{bomfim_multifractal_2015,varandas_weak_2015,VarandasThaoGibbs2017}, see also \cite{BarralQu2012a}. The following result was proved in \cite{CJPS_topo}: under some mild conditions on the dynamical system, if $\cF$ is asymptotically additive, and if $\mu_n = \mu$ for all $n$, where $\mu$ is a weak Gibbs measure with respect to an asymptotically additive sequence  $\GG = (g_n)_{n\geq 1}$, then the LDP holds with a rate function $I$ given by 
\begin{equation}\label{eq:Iofx}
	I(x) = \sup_{\alpha \in \rr} (\alpha x - P_T(\GG + \alpha \cF) + P_T(\GG)), \quad x\in \rr,
\end{equation}
where the asymptotically additive sequence $\GG + \alpha \cF$ is defined in the obvious way.

Theorem~\ref{th:maintheorem} implies that $(n^{-1}f_n)_{n\geq 1}$ and $(n^{-1}S_nf)_{n\geq 1}$, seen as sequences of random variables with respect to the measures $\mu_n$, are {\em exponentially equivalent} (see \cite[Section 4.2.2]{DZ2000}), and as such they obey the exact same LDP (if any). Applying also Theorem~\ref{th:maintheorem} to the sequences $\GG$ and $\GG+\alpha \cF$, we obtain that the right-hand side of \eqref{eq:Iofx} can be written in terms of the corresponding additive potentials, so that the above LDP is no more general than previously known results in the additive case (see for example \cite{comman-2009,PS-2018}).

\begin{remark}
The short review about almost additive and asymptotically additive sequences above is far from complete. Numerous other works have studied or used almost additive sequences (see for example \cite{Ban_Chang_2011,Yayama_2011,barralfeng_2012,yayama_2011b,LiuGongZhou2016,Yayama_2016,barany2018domination,1078_0947_2019_4_2059}) and asymptotically additive sequences (see for example \cite{cheng_zhao_cao_2012,Yan_2013,zhao_conditional_2014,OjalaSuomalaWu2017,WangZhao2018,TianWangWang2019,sun2019ergodic}). 
 We also mention that recent works (for example \cite{IommiYayama2012,IommiYayama2014,LiZhang2017,Iommi2018,Ferreira2019}) considered such potential sequences on non-compact spaces, and an extension of our main result to such spaces will be proved in Section~\ref{sec:gennoncompactdisc}.	
\end{remark}

\section{Further consequences, extensions and open questions}\label{sec:corollariesetc}

We investigate in this section some less obvious consequences and extensions of Theorem~\ref{th:maintheorem}, and we discuss two open questions.

\subsection{Quasi-Bernoulli and weakly coupled measures}\label{sec:weakGibbsQB}

In this section, we give a corollary of Theorem~\ref{th:maintheorem} that applies to weakly coupled measures\footnote{Such measures were considered in \cite[Definition 8.2]{lewis_entropy_1995}, see also \cite{pfister_thermodynamical_2002,CJPS-2017}.} on shift spaces, and in particular, to quasi-Bernoulli measures. We let $X = \cA^{\nn}$ for some finite set $\cA$. We denote the points of $X$ by $x = (x_i)_{i\geq 1}$ and introduce the distance $d$ given by $d(x,y) = 0$ if $x=y$, and by $d(x,y) = 2^{-n(x,y)}$ where $n(x,y) = \min\{i\geq 1: x_i \neq y_i\}$ if $x\neq y$. We let $T$ be the left-shift, \ie $(T(x))_i = x_{i+1}$, $i\geq 1$.
Given a measure $\mu \in \cP(X)$, we write $\mu_n(a_1, \dots, a_n) = \mu(\{ x\in X: x_i = a_i, 1\leq i \leq n\})$, $a_1, \dots, a_n\in \cA$, $n\geq 1$. We now define two classes of measures.
\begin{definition}
A measure $\mu \in \cP(X)$ is {\em weakly coupled} if $\mu_1(a) > 0$ for all $a\in \cA$ and if there exists a sequence $(D_n)_{n\geq 1} \subset [1, \infty)$ satisfying $\lim_{n\to\infty} \frac 1 n \log D_n = 0$ such that
\begin{equation}\label{eq:weaklydecoupled}
D_n^{-1}\leq \frac{\mu_{n+m}	(a_1, \dots, a_{n+m})}{\mu_{n}	(a_1, \dots, a_{n}) \mu_{m}	(a_{n+1}, \dots, a_{n+m})} \leq D_n, \qquad m,n\geq 1,~ a_1, \dots, a_{n+m}\in \cA.
\end{equation}
A measure $\mu \in \cP(X)$ is {\em quasi-Bernoulli} if the above holds with $D_n = D$ for some $D\geq 1$ independent of $n$.
\end{definition}

Given a weakly coupled measure $\mu$, the sequence $\cF = (f_n)_{n\geq 1}$ given by
\begin{equation}\label{eq:deffnbernoulli}
	f_n(x) = \log \mu_n(x_1, \dots, x_n), \qquad n\geq 1,~x\in X
\end{equation}
satisfies the bound
\begin{equation}\label{eq:weaklyalmostadd}
	\|f_{n+m}-f_n - f_m\circ T^n\|_\infty\leq \log D_n, \quad n,m\geq 1.
\end{equation}
In particular, if $\mu$ is quasi-Bernoulli, then $\cF$ is almost additive with $C = \log D$. It turns out that \eqref{eq:weaklyalmostadd} with $\lim_{n\to\infty} \frac 1 n \log D_n = 0$ is enough to ensure that $\cF$ is asymptotically additive (a variation of the standard argument that applies to this case is provided in \cite[Theorem 6.1 (3)]{CJPS_topo}). 
We thus obtain the following immediate corollary of Theorem~\ref{th:maintheorem}:
\begin{corollary}\label{cor:quasiB}
	Let $\mu$ be a weakly coupled measure (or a quasi-Bernoulli measure). Then $\mu$ is weak Gibbs with respect to an additive potential, \ie there exists $f\in C(X)$ and a sequence $(K_n)_{n\geq 1}\subset [1,\infty)$ such that $\lim_{n\to\infty} n^{-1} \log K_n = 0$ and 
\begin{equation}\label{eq:weakGibbsBer}
K_n^{-1} \leq \frac{\mu_n(x_1, \dots, x_n)}{e^{S_n f(x)}} \leq K_n, \quad n\geq 1,~x\in X.
\end{equation}
\end{corollary}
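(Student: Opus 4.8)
The plan is to derive Corollary~\ref{cor:quasiB} as an essentially immediate consequence of Theorem~\ref{th:maintheorem} applied to the sequence $\cF = (f_n)_{n\geq 1}$ defined in \eqref{eq:deffnbernoulli}. First I would verify the starting point already sketched before the statement: since $\mu_{n+m}(a_1,\dots,a_{n+m}) = \mu(\{x : x_i = a_i\})$ and $f_m\circ T^n$ evaluated at such an $x$ equals $\log\mu_m(a_{n+1},\dots,a_{n+m})$, the weak-coupling bound \eqref{eq:weaklydecoupled} translates directly into \eqref{eq:weaklyalmostadd}, i.e.\ $\|f_{n+m}-f_n-f_m\circ T^n\|_\infty \leq \log D_n$ with $n^{-1}\log D_n\to 0$. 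Note that each $f_n$ is continuous (indeed locally constant, depending only on the first $n$ coordinates) and bounded, using $\mu_1(a)>0$ for all $a$ together with \eqref{eq:weaklydecoupled} to rule out zero probabilities of cylinders.

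Next I would invoke the fact, stated in the excerpt and attributed to \cite[Theorem 6.1 (3)]{CJPS_topo}, that the bound \eqref{eq:weaklyalmostadd} with $\lim_n n^{-1}\log D_n = 0$ implies $\cF$ is asymptotically additive. (In the quasi-Bernoulli case this is even simpler: $D_n \equiv D$ makes $\cF$ almost additive directly from Definition~\ref{def:definitionaa}(i).) Theorem~\ref{th:maintheorem} then furnishes $f\in C(X)$ with $\lim_{n\to\infty} n^{-1}\|f_n - S_n f\|_\infty = 0$. Writing $u_n = f_n - S_n f$, so that $\|u_n\|_\infty =: \log K_n$ with $K_n := \max(1, e^{\|u_n\|_\infty}) = e^{\|u_n\|_\infty}$ for $n$ large (or simply set $K_n = e^{\|u_n\|_\infty}$ and note $K_n\geq 1$ automatically), we have $\lim_{n\to\infty} n^{-1}\log K_n = 0$.

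Finally I would unwind the definitions: $f_n(x) = \log\mu_n(x_1,\dots,x_n)$ gives $\mu_n(x_1,\dots,x_n) = e^{f_n(x)} = e^{S_n f(x)}e^{u_n(x)}$, whence
\begin{equation*}
K_n^{-1} = e^{-\|u_n\|_\infty} \leq e^{u_n(x)} = \frac{\mu_n(x_1,\dots,x_n)}{e^{S_n f(x)}} \leq e^{\|u_n\|_\infty} = K_n, \quad n\geq 1,~x\in X,
\end{equation*}
which is exactly \eqref{eq:weakGibbsBer}. I do not anticipate a genuine obstacle here: the only points requiring a word of care are the positivity/boundedness of the $f_n$ (handled via $\mu_1(a)>0$ and telescoping through \eqref{eq:weaklydecoupled}) and the precise citation of the asymptotic-additivity implication for weakly coupled measures; everything else is bookkeeping once Theorem~\ref{th:maintheorem} is in hand. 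One may optionally remark that \eqref{eq:weakGibbsBer} is the shift-space form \eqref{eq:Gibbsshift} of the weak Gibbs property with $P_T(\cF) = 0$, which holds because $\sum_{C}\mu(C) = 1$ over rank-$n$ cylinders forces the pressure of $\cF$ to vanish, consistent with $P_T(\cF) = P_T(f)$.
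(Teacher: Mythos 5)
Your proposal is correct and follows the same route as the paper: translate the weak-coupling bound into \eqref{eq:weaklyalmostadd}, invoke the cited fact that this makes $\cF$ asymptotically additive, apply Theorem~\ref{th:maintheorem}, and set $K_n = e^{\|f_n - S_n f\|_\infty}$. Your extra remarks on positivity of cylinder probabilities and on $P_T(\cF)=0$ are sound and match details the paper handles implicitly or in an adjacent remark.
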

\begin{remark} Note that the function $f$ in  \eqref{eq:weakGibbsBer} has vanishing topological pressure, since the topological pressure of the sequence \eqref{eq:deffnbernoulli} is easily seen to be zero (see also \cite{iommi_weak_2016}).
\end{remark}

Corollary~\ref{cor:quasiB} gives a partial answer to a question raised in \cite[p.~1421]{BarralMensi2007} and also by Pablo Shmerkin:

{\bf Open question:} Is there, for every quasi-Bernoulli measure  $\mu$,  a potential $f\in C(X)$ with respect to which $\mu$ is a Gibbs measure, \ie such that \eqref{eq:weakGibbsBer} holds with $K_n$ replaced by a constant $K\geq 1$ independent of $n$?

We make the following comments about this question:
\begin{enumerate}[leftmargin=2em]
	\item The converse is true: if $\mu$ is Gibbs with respect to some $f\in C(X)$, then $\mu$ is quasi-Bernoulli. 
\item Corollary~\ref{cor:quasiB} provides a partial answer by guaranteeing that $\mu$ is at least {\em weak} Gibbs for some $f\in C(X)$. 
\item An example of quasi-Bernoulli measure that is not Gibbs with respect to any {\em Hölder}-continuous potential is given in \cite[Example 2.10 (2)]{barany2018domination}, but this measure may still be Gibbs with respect to a continuous potential.
\item If $\mu$ is weakly coupled (instead of quasi-Bernoulli), then $\mu$ is not, in general, Gibbs for some $f\in C(X)$. Indeed, if $\mu$ is invariant, the Gibbs property implies that $\mu$ is mixing \cite[Proposition 20.3.6]{KH1995}, whereas many invariant measures in statistical mechanics are weakly coupled without even being ergodic.
\item The question is a special case of the open problem stated in the next section.
\end{enumerate}

\subsection{An open question about almost additive sequences and Gibbs measures}\label{sec:openquestionAlmostadd}

We discuss here some specific properties of almost additive sequences and the associated Gibbs measures.
The following definition (see for example equation (2.1) in  \cite{bomfim_multifractal_2015}) is a direct generalization of the condition introduced by Bowen in the additive case \cite{bowen_7475}.

\begin{definition}\label{def:boundedvar}
A sequence of functions $(f_n)_{n\geq 1}\subset C(X)$ satisfies the {\em bounded variation condition} if there exists $\varepsilon > 0$ such that 
\begin{equation}\label{eq:boundedmildvar}
\sup_{n\geq 1}\sup_{\substack{x,y\in X, \\d_n(x,y)<\varepsilon}} 
\bigl|f_n(x)-f_n(y)\bigr| < \infty  .	
\end{equation}
\end{definition}
In the case of shift spaces, using the notation of Section~\ref{sec:weakGibbsQB}, this condition becomes
\begin{equation}\label{eq:bvarshift}
\sup_{n\geq 1}\sup_{\substack{x,y\in X, \\ x_i = y_i, 1\leq i \leq n}} 
 \bigl|f_n(x)-f_n(y)\bigr| < \infty .	
\end{equation}
As mentioned in Section~\ref{sec:Gibbswg}, when $\cF = (f_n)_{n\geq 1}$ is almost additive and satisfies the bounded variation condition, the existence of a unique invariant Gibbs measure, which is also the unique equilibrium measure for $\cF$, was established for some dynamical systems in \cite{barreira-2006,Mummert06, MummertThesis,Bar2011}. (The same is not true with asymptotically additive sequences with bounded variations, see Remark~\ref{rem:notuniqueAsa}.)

With $\cF$ as above, there exists by Theorem~\ref{th:maintheorem} a function $f\in C(X)$ such that \eqref{eq:asympaddnew} holds. However, even though the bounded variation condition is satisfied by $\cF$, it is not, in general, satisfied by the sequence $(S_n f)_{n\geq 1}$. Thus, the uniqueness argument in \cite{barreira-2006,Mummert06, MummertThesis,Bar2011} cannot be replaced by the corresponding standard argument for the additive potential $f$ (see for example \cite{bowen_7475} or \cite[Theorem 20.3.7]{KH1995}), since this argument requires $(S_n f)_{n\geq 1}$ to have bounded variations.
In the same way, the unique equilibrium measure for $\cF$ is Gibbs with respect to $\cF$, but it is in general only weak Gibbs with respect to $f$, as discussed previously.

In fact, weak Gibbs measures are a property of the equivalence class $\widetilde f$ (recall Remark~\ref{rem:setupmainthm}): if $f\sim f'$, then $\mu$ is  weak Gibbs with respect to $f$ iff it is weak Gibbs with respect to $f'$. On the contrary, Gibbs measures and the bounded variation condition may depend on the actual function within an equivalence class.

We are thus naturally brought to the following question:

{\bf Open question:} Is there, given any almost additive sequence $(f_n)_{n\geq 1}$ with bounded variations, a function $f\in C(X)$ such that
\begin{equation}\label{eq:strongerequiv}
\sup_{n\geq 1} \|f_n-S_n f\|_\infty < \infty \qquad ?	
\end{equation}
The condition \eqref{eq:strongerequiv} is stronger than \eqref{eq:asympaddnew}.
If  \eqref{eq:strongerequiv} holds, it is immediate that $(S_n f)_{n\geq 1}$ also has bounded variations and that the unique invariant Gibbs measure for $\cF$ is also the unique invariant Gibbs measure for $f$.

As a corollary, a positive answer to the question above would imply that every quasi-Bernoulli measure (see Section~\ref{sec:weakGibbsQB}) is actually a Gibbs measure for some $f\in C(X)$, since, in the case of quasi-Bernoulli measures, the sequence $\cF$ given by \eqref{eq:deffnbernoulli} is almost additive and has bounded variations (the left-hand side of \eqref{eq:bvarshift} is actually zero).

\begin{remark}\label{rem:notuniqueAsa}
In the case of asymptotically additive sequences (instead of almost additive ones), the bounded variation condition does not guarantee the uniqueness of the equilibrium state, even in the simplest setups. For example, let $X$ be the full shift $\cA^\nn$ as in Section~\ref{sec:weakGibbsQB} and let $f\in C(X)$ be a function which admits (at least) two distinct invariant weak Gibbs measures $\mu, \mu'$ (such examples are well known in statistical mechanics: apply the main result of \cite{PS-2019} to any interaction admitting several equilibrium states). Let then $\cF$ be as in \eqref{eq:deffnbernoulli}. Here again, $\cF$ has bounded variations. Moreover, we have $\lim_{n\to\infty}\frac 1n\|f_n - S_n f \|_\infty = 0$, and thus $\cF$ is asymptotically additive (this was also observed in \cite{iommi_weak_2016}). However, $\cF$  admits $\mu$ as a Gibbs measure, and also $\mu'$ as a weak Gibbs measure; in particular, both $\mu$ and $\mu'$ are equilibrium states for $\cF$.
\end{remark}

\subsection{Non-compact spaces and discontinuous potentials}\label{sec:gennoncompactdisc}

For simplicity, we have up to now limited ourselves to continuous potentials and continuous sequences of potentials on compact metric spaces, which is the most common setup (although the non-compact case has been considered for example in \cite{IommiYayama2012,IommiYayama2014,LiZhang2017,Iommi2018,Ferreira2019}). In this section, we let $X$ be any topological space.
We give a generalization of Theorem~\ref{th:maintheorem} which applies to such $X$, and without any regularity condition on the functions $f_n$.\footnote{In practice, it is most common to require the functions $f_n$ to be at least measurable, but this is not necessary for Theorem~\ref{th:thmdiscontinuousre}.}  The map $T:X\to X$ is still assumed to be continuous.
We denote by $B(X)$ the space of bounded functions, and by $C_b(X)$ the space of bounded continuous functions on $X$. Both spaces are Banach with respect to the norm $\|\cdot \|_\infty$.

\begin{theorem}\label{th:thmdiscontinuousre}
Let $X$ be a topological space, and let $(f_n)_{n\geq 1}$ be a sequence of real-valued functions satisfying\footnote{The quantity $\|f_n - S_n f\|_\infty$ in \eqref{eq:defasymostaddrere} is allowed to be infinite, since both $f$ and $f_n$ may be unbounded.}
\begin{equation}\label{eq:defasymostaddrere}
	\inf_{f\in C(X)}\limsup_{n\to\infty} \frac 1 n \|f_n - S_n f\|_\infty = 0.
\end{equation}
Then, there exists $f\in C(X)$ such that \eqref{eq:asympaddnew} holds.\end{theorem}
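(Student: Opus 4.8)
The plan is to reduce the statement to Theorem~\ref{thm:generalizedversion} by a suitable choice of the triple $(V, W, K)$. The obstacle in the non-compact setting is that $C(X)$ (arbitrary continuous functions) need not be a Banach space under $\|\cdot\|_\infty$, and the $f_n$ may be unbounded, so I cannot simply take $W = C(X)$ and $V = B(X)$ as in the compact case. The key observation is that the assumption \eqref{eq:defasymostaddrere} forces the ``essential'' part of each $f_n$ to be approximable by Birkhoff sums of \emph{bounded} continuous functions: for each $k$ there is $f^{(k)}\in C(X)$ with $\limsup_n n^{-1}\|f_n - S_n f^{(k)}\|_\infty \le 1/k < \infty$, and a telescoping/averaging manipulation (replacing $f^{(k)}$ by $n^{-1}S_n f^{(k)}$, using that $f^{(k)} - n^{-1}S_n f^{(k)}$ is a coboundary $h_n - K h_n$, exactly as in Lemma~\ref{lem:norme1nsn}) shows that $f^{(k)}$ is, up to a coboundary, bounded. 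More precisely, I would argue that the class of each $f^{(k)}$ modulo coboundaries of $C(X)$ has a bounded representative, so without loss of generality $f^{(k)}\in C_b(X)$.

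Concretely, I would set $V = B(X) + C(X)$ (or just work with the sequences directly), $W = C_b(X)$ with norm $\|\cdot\|_\infty$ — which \emph{is} a Banach space — and $Kf = f\circ T$, which maps $C_b(X)$ to itself and is a contraction since $T$ is continuous (continuity of $T$ is exactly what guarantees $f\circ T$ is continuous; boundedness is preserved trivially). The point to check is that the hypothesis \eqref{eq:defasymostaddrere}, a priori an infimum over all of $C(X)$, can be upgraded to
\[
\inf_{f\in C_b(X)}\limsup_{n\to\infty}\frac1n\|f_n - S_n f\|_\infty = 0 .
\]
This follows from the boundedness argument of the previous paragraph: given $f^{(k)}\in C(X)$ achieving $\limsup_n n^{-1}\|f_n - S_n f^{(k)}\|_\infty \le 1/k$, the function $g^{(k)} = \frac1m S_m f^{(k)}$ for large fixed $m$ satisfies $S_n g^{(k)} \sim S_n f^{(k)}$ with controlled error, and more importantly $g^{(k)}$ differs from a bounded function by a coboundary, so one can replace $f^{(k)}$ by a bounded continuous function while only affecting the $\limsup$ by an arbitrarily small amount.

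Once the hypothesis is known to hold with the infimum over $W = C_b(X)$, Theorem~\ref{thm:generalizedversion} applies verbatim with $(V, W, K)$ as above and produces $f\in C_b(X)\subset C(X)$ with $\lim_n n^{-1}\|f_n - S_n f\|_\infty = 0$, which is exactly \eqref{eq:asympaddnew}. The main technical point, and the step I expect to require the most care, is the reduction ``$C(X)$-approximable $\Rightarrow$ $C_b(X)$-approximable'': one must verify that the coboundary manipulation used for the bound in Lemma~\ref{lem:norme1nsn} goes through when the ambient space is only $C(X)$ (not Banach), that the resulting representative is genuinely in $C_b(X)$, and that the arithmetic of the $\limsup$ bounds is not spoiled by the unboundedness that \eqref{eq:defasymostaddrere} explicitly allows in the quantity $\|f_n - S_n f\|_\infty$. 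Everything else is a direct citation of Theorem~\ref{thm:generalizedversion}.
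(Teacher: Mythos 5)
Your proposal correctly identifies the target — reduce to Theorem~\ref{thm:generalizedversion} with $W = C_b(X)$ and $K f = f\circ T$, and upgrade the infimum in \eqref{eq:defasymostaddrere} from $C(X)$ to $C_b(X)$ — but the key reduction step is wrong. You claim that each $f^{(k)}$ ``differs from a bounded function by a coboundary'' via the averaging $g^{(k)} = m^{-1} S_m f^{(k)}$. This is false in general and the averaging does nothing to unboundedness: take $X=\rr$, $T = \mathrm{id}$, $f^{(k)} = \exp$. Then every coboundary $h - h\circ T$ vanishes identically, the equivalence class of $\exp$ is $\{\exp\}$, and no bounded representative exists — yet $f_n = n\exp$ satisfies \eqref{eq:defasymostaddrere} with $f = \exp$. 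So ``WLOG $f^{(k)}\in C_b(X)$'' cannot be obtained by a coboundary argument alone, and applying Theorem~\ref{thm:generalizedversion} with $W = C_b(X)$ directly to the original $(f_n)_{n\ge 1}$ would produce at best $f\in C_b(X)$, which in this example does not exist. There is also a secondary issue with your choice $V = B(X)+C(X)$: this set contains unbounded functions, so $\|\cdot\|_\infty$ is not finite on it and $(V, \|\cdot\|)$ is not a normed space as required by Theorem~\ref{thm:generalizedversion}.

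The missing idea, and the one the paper uses, is to first \emph{normalize the sequence $(f_n)$ itself}: pick one $g\in C(X)$ with $\limsup_n n^{-1}\|f_n - S_n g\|_\infty < 1$, and replace $f_n$ by $f_n' = f_n - S_n g$ (set to $0$ for small $n$). Then each $f_n'$ is bounded, the condition \eqref{eq:defasymostaddrere} is preserved (replace $f$ by $f+g$), and the conclusion for $(f_n')$ with some $f'\in C_b(X)$ gives the conclusion for $(f_n)$ with $f = f' + g \in C(X)$. After this normalization, no coboundary argument is needed to restrict the infimum to $C_b(X)$: if $f\in C(X)\setminus C_b(X)$, then $\|S_n f\|_\infty = \infty$ for infinitely many $n$ (since $\|S_n f\|_\infty<\infty$ and $\|f\|_\infty=\infty$ force $\|S_{n+1}f\|_\infty=\infty$), and because $\|f_n'\|_\infty<\infty$ the quantity $\|f_n' - S_n f\|_\infty$ is infinite infinitely often, so such $f$ contribute $+\infty$ to the $\limsup$ and are automatically discarded. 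With $V=B(X)$ and $W=C_b(X)$, Theorem~\ref{thm:generalizedversion} then applies cleanly. In short: you try to tame the approximants $f^{(k)}$, whereas the working argument tames the sequence $(f_n)$ first, after which the approximants take care of themselves.
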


\begin{proof}{} We will apply Theorem~\ref{thm:generalizedversion} with $V = B(X), W = C_b(X)$, $\|\cdot\| = \|\cdot\|_\infty$, and again $Kf = f\circ T$. 

We first reduce the problem to the case where $f_n\in B(X)$ for all $n\geq 1$. By \eqref{eq:defasymostaddrere}, there exists $g\in C(X)$ and $n_0$ such that $\|f_n -S_n g\|_\infty < n$ for all $n\geq n_0$. Thus, the sequence $(f_n')_{n\geq 1}$ defined by $f_n' \equiv 0$ if $n<n_0$ and $f_n'  = f_n - S_n g$ otherwise is such that $\|f_n'\|_\infty < \infty$ for all $n$, and by replacing $f$ with $f+g$, we obtain that the sequence $(f_n')_{n\geq 1}$ also satisfies  \eqref{eq:defasymostaddrere}.
Moreover, \eqref{eq:asympaddnew} holds for $(f_n')_{n\geq 1}$ iff it holds for $(f_n)_{n\geq 1}$ (again by replacing $f$ with $f+g$). So as of now, we assume without loss of generality that $f_n \in B(X)$ for all $n$.

In order to apply Theorem~\ref{thm:generalizedversion}, it remains to show that we can replace the infimum in \eqref{eq:defasymostaddrere} with an infimum over the set $C_b(X)$, \ie that 
\begin{equation}\label{eq:defasymostaddrererere}
	\inf_{f\in C_b(X)}\limsup_{n\to\infty} \frac 1 n \|f_n - S_n f\|_\infty = 0.
\end{equation}
To prove \eqref{eq:defasymostaddrererere}, let $f\in C(X)\setminus C_b(X)$. Then  $\|S_n f\|_\infty = \infty$  for infinitely many values of $n$ (indeed, if $\|S_n f\|_\infty < \infty$ and $\|f\|_\infty=\infty$, then $\|S_{n+1} f \|_\infty = \|f + (S_n f)\circ T\|_\infty = \infty$). Since $\|f_n\|_\infty < \infty$ for all $n$, we find $\limsup_{n\to\infty} \frac 1 n \|f_n - S_n f\|_\infty = \infty$. Thus, the functions in $C(X)\setminus C_b(X)$ can be discarded in the infimum in \eqref{eq:defasymostaddrere}, so that \eqref{eq:defasymostaddrererere} indeed holds. This completes the proof.
\end{proof}

In the setup of Theorem~\ref{th:thmdiscontinuousre}, the decomposition \eqref{eq:fsublinearcorr} remains valid with some (not necessarily continuous) functions $u_n$ satisfying $\lim_{n\to\infty} n^{-1}\|u_n \|_\infty = 0$.

The condition \eqref{eq:defasymostaddrere} generalizes the definition of asymptotically additive sequences to non-compact spaces and possibly discontinuous functions $f_n$. We now show that, also in the non-compact case, almost additive sequences are asymptotically additive. The standard proof  (see \cite[Proposition A.5 (ii)]{feng_huang_2010} or \cite[Proposition 2.1]{zhao_asymptotically_2011}) requires each function $f_n$ to be bounded, which may not be the case on unbounded spaces. We show here how the argument can be adapted.
\begin{proposition}\label{prop:almostaddunbounded}
Let $X$ be a topological space, and let $(f_n)_{n\geq 1}\subset C(X)$ be a sequence of continuous functions satisfying  \eqref{eq:defalmostadd} for some $C\geq 0$. Then the sequence $(f_n)_{n\geq 1}$ satisfies \eqref{eq:defasymostaddrere}. In particular, the conclusion of Theorem~\ref{th:thmdiscontinuousre} holds.
\end{proposition}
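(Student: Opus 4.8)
The plan is to reduce everything to Theorem~\ref{th:thmdiscontinuousre} by verifying its hypothesis \eqref{eq:defasymostaddrere}; that is, I want to produce, for each $k\geq 1$, an explicit $g^{(k)}\in C(X)$ with $\limsup_{n\to\infty}\frac1n\|f_n-S_ng^{(k)}\|_\infty\leq C/k$. On compact $X$ one would just take $f_k/k$, but here the boundary corrections that this choice produces involve the partial sum $\psi_k:=\sum_{j=1}^{k-1}f_j$, which need not be bounded. The key observation is that this boundary correction is a telescoping (coboundary-type) term, so it can be absorbed into the potential: I would set
\[
g^{(k)}:=\frac1k\bigl(f_k+\psi_k-\psi_k\circ T\bigr),
\]
which lies in $C(X)$ since it is built from finitely many $f_j\in C(X)$ and the continuous map $T$.

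Next I would establish the pointwise bound $\|f_n-S_ng^{(k)}\|_\infty\leq \frac{(n+k)C}{k}$ for all $n\geq k+1$. Writing the almost-additive defects as $f_{i+k}=f_i+f_k\circ T^i+e_i$ with $\|e_i\|_\infty\leq C$ for $i\geq 1$, and $f_{n+j}=f_n+f_j\circ T^n+e'_{n,j}$ with $\|e'_{n,j}\|_\infty\leq C$ for $j\geq 1$, one telescopes $S_nf_k=f_k+\sum_{i=1}^{n-1}f_k\circ T^i$ to get $S_nf_k=\sum_{i=n}^{n+k-1}f_i-\psi_k-\sum_{i=1}^{n-1}e_i$ (this is where $n\geq k+1$ is used, to split the index ranges, and where one isolates the $i=0$ term so that \eqref{eq:defalmostadd} is only ever applied with all indices $\geq 1$), and then expands the block $\sum_{i=n}^{n+k-1}f_i=kf_n+\psi_k\circ T^n+E_n$ with $\|E_n\|_\infty\leq(k-1)C$. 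Combining these and using the exact identity $S_n(\psi_k-\psi_k\circ T)=\psi_k-\psi_k\circ T^n$ gives $f_n-S_ng^{(k)}=\frac1k\bigl(\sum_{i=1}^{n-1}e_i-E_n\bigr)$, and hence the stated bound. The point is that every estimate here controls only \emph{differences} of the functions $f_j$, never the functions themselves, so boundedness of the $f_j$ is never required.

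Dividing by $n$ and letting $n\to\infty$ yields $\limsup_{n\to\infty}\frac1n\|f_n-S_ng^{(k)}\|_\infty\leq C/k$; since $g^{(k)}\in C(X)$, the infimum in \eqref{eq:defasymostaddrere} is $\leq C/k$ for every $k$, hence it is $0$, and Theorem~\ref{th:thmdiscontinuousre} then supplies the desired $f\in C(X)$ satisfying \eqref{eq:asympaddnew}. The main obstacle is conceptual rather than computational: one has to recognize that the unbounded boundary term that breaks the standard compact-case argument is a telescoping term and can therefore be folded into the potential. Once that is seen, the remaining work—keeping track of index ranges and of the boundary indices $i=0$, $j=0$ so as to stay within the regime where \eqref{eq:defalmostadd} applies—is routine.
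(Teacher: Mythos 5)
Your proof is correct, and it takes a genuinely different route from the paper's. The paper exploits the observation that $\|f_n - S_n f_1\|_\infty \leq (n-1)C$: it subtracts the additive sequence $(S_n f_1)_{n\geq1}$ from $(f_n)_{n\geq1}$ to obtain a \emph{bounded} almost additive sequence $(f_n')_{n\geq1}$ with the same constant $C$, cites the standard compact/bounded-case argument from \cite{feng_huang_2010,zhao_asymptotically_2011} to conclude \eqref{eq:defasymostaddrere} for $(f_n')$, and then shifts back by $f_1$. You instead absorb the unboundedness on the \emph{potential} side, adding the coboundary $\frac{1}{k}(\psi_k - \psi_k\circ T)$ to the naive choice $f_k/k$, and you carry out the telescoping estimate from scratch to get the explicit bound $\|f_n - S_n g^{(k)}\|_\infty \leq (n+k)C/k$ for $n\geq k+1$. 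Both tricks are ways of saying that the obstruction to the standard compact-case argument is itself additive (a Birkhoff sum in the paper's version, a coboundary in yours) and can therefore be cancelled; they produce approximating potentials that differ by a small amount. The paper's version is shorter because it delegates the core estimate to the cited references, whereas yours is self-contained and makes the approximating potentials $g^{(k)}$ fully explicit, which has some expository value. I checked the algebra: with $e_i := f_{i+k} - f_i - f_k\circ T^i$ and $E_n := \sum_{j=1}^{k-1}(f_{n+j}-f_n-f_j\circ T^n)$ one indeed gets $f_n - S_n g^{(k)} = \frac{1}{k}\bigl(\sum_{i=1}^{n-1}e_i - E_n\bigr)$, and your care to keep all indices $\geq 1$ when invoking \eqref{eq:defalmostadd} is exactly right.
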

\begin{proof}{} By \eqref{eq:defalmostadd}, we have $\|f_n - S_n f_1 \|_\infty \leq (n-1)C$ for all $n$. The sequence $(f_n')_{n\geq 1} = (f_n - S_n f_1)_{n\geq 1}$ still satisfies \eqref{eq:defalmostadd} with the same constant $C$, and since $\|f_n'\|_\infty < \infty$ for all $n$, the standard proof (\cite[Proposition A.5 (ii)]{feng_huang_2010} or \cite[Proposition 2.1]{zhao_asymptotically_2011}) implies that $(f_n')_{n\geq 1}$ satisfies \eqref{eq:defasymostaddrere}. By replacing $f$ with $f+f_1$ in \eqref{eq:defasymostaddrere}, we obtain the conclusion of the proposition. \end{proof}

\subsection{Moderate variation condition}\label{sec:moderatevar}

We now introduce the {\em moderate variation condition} (also sometimes called {\em tempered} or {\em mild} variation condition), which is weaker than the bounded variation condition (Definition~\ref{def:boundedvar}). In the remainder of this section, we assume for simplicity that $(X,d)$ is again a compact metric space.
\begin{definition}
A sequence of functions $(f_n)_{n\geq 1}$ satisfies the {\em moderate variation condition} if\,\footnote{There are minor variations of this condition in the literature, compare for example with equation (15) in \cite{barreira_gelfert_2006} and equation (2.2) in \cite{bomfim_multifractal_2015}.}
\begin{equation}\label{eq:mildvar}
\lim_{\varepsilon \downarrow 0}\limsup_{n\to\infty}\sup_{\substack{x,y\in X, \\d_n(x,y)<\varepsilon}} 
\frac 1 n\bigl|f_n(x)-f_n(y)\bigr| = 0.	
\end{equation}
\end{definition}
As with Definition~\ref{def:boundedvar}, one can simplify \eqref{eq:mildvar} in the case of shift spaces.
It is easy to realize that in the continuous, additive case, \ie if $f_n = S_n f$ for some $f\in C(X)$, then \eqref{eq:mildvar} holds. 

This moderate variation condition is often required for non-additive sequences, and it was observed in \cite[Lemma 2.1 and Remark 1]{zhao_asymptotically_2011} that this condition is actually redundant in the asymptotically additive case, since any asymptotically additive sequence automatically satisfies \eqref{eq:mildvar}. This is easily seen using Theorem~\ref{th:maintheorem} or Theorem~\ref{th:thmdiscontinuousre}. 

We present now a version of Proposition~\ref{prop:almostaddunbounded} where the continuity condition on $f_n$ is replaced by the moderate variation condition.
\begin{proposition}\label{prop:almostaddunboundedencore}
Assume $(X,d)$ is a compact metric space, and let $(f_n)_{n\geq 1}$ be a sequence of real-valued functions satisfying the moderate variation condition and   \eqref{eq:defalmostadd} for some $C\geq 0$. Then the sequence $(f_n)_{n\geq 1}$ satisfies \eqref{eq:defasymostaddrere}. In particular, the conclusion of Theorem~\ref{th:thmdiscontinuousre} holds.
\end{proposition}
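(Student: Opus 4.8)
The plan is to verify \eqref{eq:defasymostaddrere} directly, by exhibiting for every $\eta>0$ a function $f\in C(X)$ with $\limsup_{n\to\infty}\frac1n\|f_n-S_nf\|_\infty\le\eta$; the candidates will be continuous regularizations of $\frac1k f_k$ for $k$ large. The first step --- which is where compactness enters --- is to observe that the moderate variation condition forces every $f_n$ to be bounded (this is not among the hypotheses). Indeed, \eqref{eq:mildvar} provides $\varepsilon_0>0$, $M<\infty$ and $n_0$ such that $\sup_{d_n(x,y)<\varepsilon_0}|f_n(x)-f_n(y)|\le Mn$ for all $n\ge n_0$; for such an $n$, the $d_n$-balls of radius $\varepsilon_0/2$ are open in the $d$-topology (each is a finite intersection of preimages of $d$-balls under the continuous maps $T^j$), hence finitely many of them cover the compact space $X$, and this bounds $f_n$. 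Then for an arbitrary index $j$ I would choose $N\ge j+n_0$ and apply \eqref{eq:defalmostadd} to the pair $(j,N-j)$ to get $\|f_j\|_\infty\le C+\|f_N\|_\infty+\|f_{N-j}\|_\infty<\infty$.

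Once boundedness is available, the standard estimate behind ``almost additive $\Rightarrow$ asymptotically additive'' applies (it uses only boundedness and \eqref{eq:defalmostadd}): iterating \eqref{eq:defalmostadd} and telescoping gives
\[
\Big\|f_n-\tfrac1k S_nf_k\Big\|_\infty\le\tfrac2k\sum_{j=1}^{k-1}\|f_j\|_\infty+C+\tfrac{nC}{k},
\]
and hence $\limsup_{n\to\infty}\frac1n\|f_n-\frac1k S_nf_k\|_\infty\le\frac Ck$ for every $k$. The only reason one cannot simply take $f=\frac1k f_k$ is that $f_k$ need not be continuous, so I would replace it by a continuous function that is close to it in sup-norm, using that \eqref{eq:mildvar} controls the oscillation of $f_k$ on $d_k$-balls. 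Concretely, set $\omega_k(\varepsilon):=\sup_{d_k(x,y)<\varepsilon}|f_k(x)-f_k(y)|$, which is finite since $f_k$ is bounded; pick finitely many centres $z_1,\dots,z_N$ whose $d_k$-balls $U_\ell$ of radius $\varepsilon$ cover $X$, together with a $d$-continuous partition of unity $(\psi_\ell)$ subordinate to the finite, $d$-open cover $(U_\ell)$. Then $g_{k,\varepsilon}:=\sum_\ell f_k(z_\ell)\psi_\ell\in C(X)$, and because $\psi_\ell(x)>0$ forces $d_k(x,z_\ell)<\varepsilon$, one gets $\|f_k-g_{k,\varepsilon}\|_\infty\le\omega_k(\varepsilon)$. (An inf-convolution of $f_k$ with respect to $d_k$ would do equally well.)

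Assembling these, with $f:=\frac1k g_{k,\varepsilon}$ and using $\|S_nh\|_\infty\le n\|h\|_\infty$,
\[
\limsup_{n\to\infty}\tfrac1n\|f_n-S_nf\|_\infty\le\tfrac Ck+\tfrac1k\,\omega_k(\varepsilon).
\]
Given $\eta>0$, \eqref{eq:mildvar} lets me choose $\varepsilon>0$ with $\limsup_{k\to\infty}\frac1k\omega_k(\varepsilon)<\eta$, and then $k$ large enough that $\frac Ck<\eta$ and $\frac1k\omega_k(\varepsilon)<\eta$, so the right-hand side is $<2\eta$; as $\eta$ was arbitrary, the infimum in \eqref{eq:defasymostaddrere} is $0$, and the conclusion of Theorem~\ref{th:thmdiscontinuousre} follows. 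I expect the two delicate points to be the initial boundedness argument --- it must be extracted from \eqref{eq:mildvar} via compactness and then propagated to all indices through \eqref{eq:defalmostadd} --- and the regularization, whose key idea is to smooth $f_k$ relative to the dynamical metric $d_k$ (where \eqref{eq:mildvar} gives the needed control) while the result stays continuous for the original metric; the bound on $\|f_n-\frac1k S_nf_k\|_\infty$ is the same routine computation as in the continuous case.
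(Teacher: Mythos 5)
Your proof is correct and follows essentially the same route as the paper's (brief) outline: first establish the standard estimate showing $\lim_{k\to\infty}\limsup_{n\to\infty}\tfrac1n\|f_n - S_n(f_k/k)\|_\infty = 0$, then replace the (possibly discontinuous) function $f_k/k$ by a sup-norm-close continuous regularization using the moderate variation bound, and finish by the triangle inequality. Your write-up makes explicit two ingredients the paper only references: the boundedness of each $f_n$ (obtained from compactness plus moderate variation for large $n$, then propagated to small $n$ via almost additivity), and the concrete partition-of-unity construction of the regularization subordinate to a finite $d_k$-ball cover.
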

\begin{proof}{} The proof that $(f_n)_{n\geq 1}$ satisfies \eqref{eq:defasymostaddrere} is given in \cite[Theorem 6.1 (4)]{CJPS_topo} and we only outline it here. The first step is to show, using the same argument as \cite[Proposition A.5 (ii)]{feng_huang_2010} or \cite[Proposition 2.1]{zhao_asymptotically_2011}, that for any $k\geq 1$,
\begin{equation}\label{eq:limnSnSn}
	\lim_{k\to\infty}\limsup_{n\to\infty}\frac 1n\left\| f_n - S_n \left(\frac {f_k} k\right) \right\|_\infty = 0.
\end{equation}
Now by the condition \eqref{eq:mildvar}, we can find, for each $k\geq 1$, a regularization $f^{(k)}\in C(X)$ of the function $\frac {f_k} k$ such that
\begin{equation*}
	\lim_{k\to\infty}\left\|\frac {f_k}k - f^{(k)} \right\|_\infty = 0.
\end{equation*}
By combining this and \eqref{eq:limnSnSn}, we obtain 
\begin{equation*}
	\lim_{k\to\infty}\limsup_{n\to\infty}\frac 1n\left\|f_n - S_n f^{(k)} \right\|_\infty = 0,
\end{equation*}
which is what we needed to show.	
\end{proof}

We conclude with a simple corollary of Proposition~\ref{prop:almostaddunboundedencore}, which addresses the case of additive, discontinuous potentials.
\begin{corollary}\label{cor:addModVar}
Let $\widehat f : X\to \rr$ be such that $(S_n \widehat f)_{n\geq 1}$ satisfies the moderate variation condition, \ie 
\begin{equation}\label{eq:mildvaraddB}
\lim_{\varepsilon \downarrow 0}\limsup_{n\to\infty}\sup_{\substack{x,y\in X, \\d_n(x,y)<\varepsilon}} 
\frac 1 n\bigl|S_n \widehat f(x)-S_n \widehat f(y)\bigr| = 0.	
\end{equation}	
Then, there exists $f \in C(X)$ such that
\begin{equation}\label{eq:addadd}
\lim_{n\to\infty}\frac 1n\|S_n \widehat f - S_n f \|_\infty = 0.
\end{equation}
\end{corollary}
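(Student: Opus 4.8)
The plan is to deduce Corollary~\ref{cor:addModVar} directly from Proposition~\ref{prop:almostaddunboundedencore} by taking the sequence $(f_n)_{n\geq 1}$ to be the additive sequence generated by $\widehat f$. First I would set $f_n = S_n \widehat f$ for $n\geq 1$. This is an \emph{honestly additive} sequence, so it certainly satisfies the almost additive bound \eqref{eq:defalmostadd} with $C = 0$: indeed $S_{n+m}\widehat f = S_n\widehat f + (S_m\widehat f)\circ T^n$ exactly. The hypothesis \eqref{eq:mildvaraddB} is precisely the statement that this sequence $(f_n)_{n\geq 1}$ satisfies the moderate variation condition \eqref{eq:mildvar}. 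Hence Proposition~\ref{prop:almostaddunboundedencore} applies (with $X$ compact as assumed throughout this section), and we conclude that $(f_n)_{n\geq 1} = (S_n\widehat f)_{n\geq 1}$ satisfies \eqref{eq:defasymostaddrere}, and that the conclusion of Theorem~\ref{th:thmdiscontinuousre} holds for it.

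The conclusion of Theorem~\ref{th:thmdiscontinuousre} is exactly that there exists $f\in C(X)$ with $\lim_{n\to\infty} n^{-1}\|f_n - S_n f\|_\infty = 0$, which with our choice $f_n = S_n\widehat f$ is the desired relation \eqref{eq:addadd}. So the whole argument is a two-line reduction: verify that $(S_n\widehat f)_{n\geq 1}$ is almost additive (trivially, with $C=0$) and that the moderate variation hypothesis on $\widehat f$ is literally the moderate variation condition for this sequence, then quote Proposition~\ref{prop:almostaddunboundedencore}.

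There is essentially no obstacle here; the only point worth a moment's care is making sure the identifications are the right ones — that ``$(S_n\widehat f)_{n\geq 1}$ satisfies the moderate variation condition'' as phrased in \eqref{eq:mildvaraddB} matches \eqref{eq:mildvar} applied to the sequence whose $n$-th term is $S_n\widehat f$ (it does, verbatim), and that Proposition~\ref{prop:almostaddunboundedencore} does not secretly require continuity of the $f_n$ (it does not — that is the whole point of that proposition, it replaces continuity by moderate variation). One might also remark, as a sanity check consistent with the paper's theme, that $f$ in \eqref{eq:addadd} is unique only up to the equivalence $\sim$ of Section~\ref{sec:proofmain}, and that when $\widehat f$ happens to be continuous one may of course take $f = \widehat f$; but none of this is needed for the proof itself.

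\begin{proof}[Proof of Corollary~\ref{cor:addModVar}]{}
Set $f_n = S_n \widehat f$ for $n\geq 1$. Since $S_{n+m}\widehat f = S_n \widehat f + (S_m \widehat f)\circ T^n$ for all $n,m\geq 1$, the sequence $(f_n)_{n\geq 1}$ satisfies \eqref{eq:defalmostadd} with $C = 0$. Moreover, \eqref{eq:mildvaraddB} says precisely that $(f_n)_{n\geq 1}$ satisfies the moderate variation condition \eqref{eq:mildvar}. Since $(X,d)$ is compact, Proposition~\ref{prop:almostaddunboundedencore} applies and yields that the conclusion of Theorem~\ref{th:thmdiscontinuousre} holds for $(f_n)_{n\geq 1}$, \ie there exists $f\in C(X)$ with $\lim_{n\to\infty} \frac 1n \|f_n - S_n f\|_\infty = 0$. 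Recalling that $f_n = S_n \widehat f$, this is exactly \eqref{eq:addadd}.
\end{proof}
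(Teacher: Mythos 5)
Your proof is correct and is essentially the paper's own argument: the corollary is stated there as ``a simple corollary of Proposition~\ref{prop:almostaddunboundedencore}'', obtained by taking $f_n = S_n\widehat f$, noting this is (trivially) almost additive and that \eqref{eq:mildvaraddB} is the moderate variation condition for this sequence, then invoking the proposition. Nothing to add.
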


Thus, by Corollary~\ref{cor:addModVar}, any potential satisfying \eqref{eq:mildvaraddB} is equivalent to a continuous one in the sense discussed in Section~\ref{sec:nonaddpot}.

\addcontentsline{toc}{section}{References}

\def\polhk#1{\setbox0=\hbox{#1}{\ooalign{\hidewidth
  \lower1.5ex\hbox{`}\hidewidth\crcr\unhbox0}}} \def\pre{{Phys. Rev. E\ }}

\end{document}